\newtheorem{theorem}{Theorem}[section]
\newtheorem*{theorem*}{Theorem}
\newtheorem{lemma}{Lemma}[section]
\newtheorem{corollary}[theorem]{Corollary}
\newtheorem{proposition}{Proposition}[section]
\newtheorem{remark}[theorem]{Remark}
\def\heat{\lf(\frac{\p}{\p t}-\Delta\ri)}
\def\Ric{\text{Ric}}
\def\lf{\left}
\def\ri{\right}
\def\p{\partial}
\def\ddbar{\partial\bar{\partial}}
\def\C{\Bbb C}
\def\R{\Bbb R}
\def\aint{\frac{\ \ }{\ \ }{\hskip -0.4cm}\int}
\def\Ric{\operatorname{Ric}}
\numberwithin{equation}{section}
\begin{document}
\title[A  gap theorem]{\bf An optimal gap theorem}

\author{ Lei Ni}


\thanks{The  author was supported in part by NSF grant DMS-0805834.}


\date{}

\maketitle

\begin{abstract} By solving the Cauchy problem for the Hodge-Laplace heat equation for $d$-closed, positive $(1, 1)$-forms, we prove an optimal gap theorem for K\"ahler manifolds with nonnegative bisectional curvature which asserts that the manifold is flat if the average of the scalar curvature over balls of radius $r$ centered at any fixed point $o$ is a function of $o(r^{-2})$. Furthermore via a  relative monotonicity estimate we obtain  a stronger statement,  namely a `positive mass' type result,  asserting that if $(M, g)$ is not flat, then $\liminf_{r\to \infty} \frac{r^2}{V_o(r)}\int_{B_o( r)}\mathcal{S}(y)\, d\mu(y)>0$ for any $o\in M$.

\end{abstract}

\section{Introduction}

In \cite{NT-jdg} the following gap theorem was proved for K\"ahler manifolds with nonnegative bisectional curvature.
\begin{theorem}\label{nt}
Let $M^m$ ($m=\operatorname{dim}_\C(M)$)  be a complete noncompact K\"ahler
manifold with nonnegative holomorphic bisectional curvature. For any $o\in M$, let  $V_o(r)$ be the volume of the ball $B_o(r)$. Then
$M$ is flat if for some $o\in M$
\begin{equation}\label{2-decay}
\frac{1}{V_o(r)}\int_{B_{o}(r)}\mathcal{S}(y)\, d\mu(y)=o( r^{-2})
\end{equation}
provided   that
\begin{equation}\label{extra}
\liminf_{r\to\infty}\left[\exp\lf(-ar^2\ri)\int_{B_{o}(r)}\mathcal{S}^2(y)d\mu(y)\right]<\infty\end{equation}
for some $a>0$.
Here $\mathcal{S}$ denotes the    scalar curvature  of $M$.
\end{theorem}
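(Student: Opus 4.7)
The plan is to run the Hodge-Laplace heat equation on $(1,1)$-forms starting from the Ricci form $\rho$, which under the bisectional curvature hypothesis is a $d$-closed nonnegative $(1,1)$-form. Hypothesis (\ref{extra}) provides exactly the exponential $L^2$ growth bound required for the Cauchy problem $\bigl(\tfrac{\partial}{\partial t}-\Delta\bigr)\eta=0$ with $\eta(\cdot,0)=\rho$ to admit a unique solution $\eta(\cdot,t)$ on the noncompact manifold $M$, in the class of forms with controlled growth at infinity.

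The first substantive step is to verify that $\eta(\cdot,t)$ remains $d$-closed, of type $(1,1)$, and nonnegative for all $t>0$. Closedness is preserved because the heat operator commutes with $d$. Nonnegativity is the delicate part: one writes the evolution equation for the eigenvalues of $\eta$ with respect to the K\"ahler form $\omega$ and applies a tensor maximum principle. The nonnegativity of the bisectional curvature is precisely what makes the curvature coupling in the Bochner identity preserve the cone of nonnegative $(1,1)$-forms, while the growth hypothesis (\ref{extra}) justifies applying the maximum principle at spatial infinity. On a K\"ahler manifold, tracing the form heat equation then yields that $u(x,t):=\operatorname{tr}_\omega\eta(x,t)$ satisfies the ordinary scalar heat equation $(\partial_t-\Delta)u=0$ with initial datum $u(\cdot,0)=\mathcal{S}$.

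The core ingredient is a matrix Li-Yau-Hamilton estimate for nonnegative $d$-closed $(1,1)$-form solutions of the Hodge heat equation, valid when the bisectional curvature is nonnegative. Contracting with $g$ yields the differential Harnack $\partial_t u + u/t\ge 0$, so $t\mapsto t\,u(o,t)$ is nondecreasing on $(0,\infty)$. On the other hand, the Li-Yau Gaussian upper bound for the heat kernel on manifolds with nonnegative Ricci curvature, combined with the hypothesis (\ref{2-decay}), gives
\[
u(o,t)\ \le\ \frac{C}{V_o(\sqrt{t})}\int_{B_o(A\sqrt{t})}\mathcal{S}(y)\,d\mu(y)+(\text{exponentially small tail})\;=\;o\bigl(t^{-1}\bigr)
\]
as $t\to\infty$. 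Since $t\,u(o,t)$ is nonnegative, nondecreasing, and tends to zero, it vanishes identically on $(0,\infty)$; by the strong maximum principle $u\equiv 0$ on $M\times(0,\infty)$, hence $\mathcal{S}\equiv 0$. Since $\mathcal{S}$ is the sum of the (nonnegative) bisectional curvatures with respect to any unitary frame, $\mathcal{S}\equiv 0$ forces the bisectional curvature to vanish identically, so $(M,g)$ is flat.

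The step I expect to be the main obstacle is the matrix Li-Yau-Hamilton estimate for $(1,1)$-form solutions of the Hodge heat equation: it requires a delicate Bochner computation using the full nonnegativity of the bisectional curvature, and the accompanying tensor maximum principle must be carried out at spatial infinity, which is where (\ref{extra}) enters most essentially. The preservation of positivity of $\eta(\cdot,t)$ along the flow is of the same technical character; once these two ingredients are in hand, the passage from the ball-average decay of $\mathcal{S}$ to $\mathcal{S}\equiv 0$ via monotonicity and heat kernel bounds is comparatively routine.
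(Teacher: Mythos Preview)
Your outline is correct, but it is not the route the paper attributes to this theorem. Theorem~\ref{nt} is quoted from \cite{NT-jdg}, and the proof there proceeds by solving the Poincar\'e--Lelong equation $\sqrt{-1}\,\partial\bar\partial u=\Ric$ (hypothesis~(\ref{extra}) enters as the growth condition~(\ref{extr2}) needed for a scalar maximum principle in that construction, cf.\ Theorem~\ref{pl}), showing that the resulting plurisubharmonic $u$ has growth $o(\log r)$ from~(\ref{2-decay}), and then invoking a Liouville theorem for plurisubharmonic functions of sub-logarithmic growth to force $u$ constant and hence $\Ric\equiv 0$. Your proposal---the Hodge--Laplace heat flow on $(1,1)$-forms, a matrix Li--Yau--Hamilton inequality giving monotonicity of $t\,u(o,t)$, and heat-kernel asymptotics turning~(\ref{2-decay}) into $t\,u(o,t)\to 0$---is instead precisely the \emph{new} method this paper introduces to prove the stronger Theorem~\ref{main1}, which dispenses with~(\ref{extra}) altogether.

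The one point where your write-up is looser than it should be is the step you yourself single out: on a noncompact manifold, producing a solution $\eta(\cdot,t)$ that is \emph{simultaneously} $d$-closed and nonnegative is the genuine difficulty, and the paper says so explicitly (constructions yielding one property need not yield the other). With~(\ref{extra}) in hand you have an exponential $L^2$ growth class in which the form heat equation has a unique solution, and then $d$-closedness (apply uniqueness to $d\eta$) and positivity (tensor maximum principle with the growth barrier) can both be pinned to that same solution; but this uniqueness argument should be made explicit rather than asserted. The paper, lacking~(\ref{extra}), replaces this entirely by solving the absolute-boundary problem on an exhaustion $\{\Omega_\mu\}$ and proving positivity there via a tensor maximum principle adapted to the mixed boundary condition---that is the technical core of the present paper, and it is what your use of~(\ref{extra}) allows you to sidestep.
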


 A result of this type was originated  in \cite{MSY}, where it was proved that $M$ is isometric to $\C^m$ under much stronger assumptions that $(M^m, g)$ (with $m\ge 2$) is of maximum volume growth (meaning that $V_o(r)\ge \delta r^{2m}$  for some $\delta>0$) and $\mathcal{S}(x)$ decays pointwisely as $r(x)^{-2-\epsilon}$ for some $\epsilon>0$. A Riemannian version of \cite{MSY} was proved in \cite{GW} shortly afterwards (see also \cite{GW2} for related results).
In  \cite{N98}, Theorem 5.1, with a parabolic method introduced on solving the so-called Poincar\'e-Lelong equation,  the result of \cite{MSY} was improved to the cases covering  manifolds of more general volume growth. Since then there are several further works aiming to prove the optimal result. See for example  \cite{nst}, \cite{CZ}.  In  particular the Ricci flow method was later introduced to the study.  Before this paper, the above Theorem \ref{nt} is the best known result. The extra assumption (\ref{extra}) is related to  the uniqueness of the heat equation solution, which is somewhat natural for the method employed in \cite{NT-jdg}. Since \cite{NT-jdg} it has been a natural question whether or not the extra assumption (\ref{extra}) can be dropped.  In view of the recent examples of H. Wu and F. Y. Zheng \cite{Wu-Zheng} on manifolds with nonnegative bisectional curvature, which include manifolds whose scalar curvature can grow to infinity in  any arbitrarily given manner along any divergent sequence of points, it seems unlikely that  (\ref{extra}) holds automatically on K\"ahler manifolds with nonnegative bisectional curvature.  The main purpose of this paper is to prove, via a completely different  method, the optimal result by removing the extra  assumption (\ref{extra}) in Theorem \ref{nt}.

\begin{theorem}\label{main1}  Without (\ref{extra}), but with the rest assumptions, Theorem \ref{nt} holds.
\end{theorem}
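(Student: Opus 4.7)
The plan is to replace the heat-equation uniqueness argument of \cite{NT-jdg} (which required (\ref{extra})) by a direct existence argument at the level of $(1,1)$-forms. Let $\rho$ denote the Ricci form of $(M,g)$. I would first solve the Cauchy problem
$$\lf(\frac{\p}{\p t}-\Delta_H\ri)\eta = 0,\qquad \eta(0,\cdot)=\rho,$$
with $\Delta_H$ the Hodge Laplacian on real $(1,1)$-forms, by exhaustion: on each ball $B_o(R_k)$ solve the Cauchy-Dirichlet problem, obtain uniform local estimates, and extract a subsequential limit $\eta(t)$ defined on $M\times(0,\infty)$. The nonnegative bisectional curvature hypothesis preserves two structural features along the flow: $d$-closedness (immediate from $[d,\Delta_H]=0$ and $d\rho=0$), and pointwise positivity of $\eta(t)$ as a real $(1,1)$-form. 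The latter follows from the Bochner-Weitzenb\"ock identity for closed $(1,1)$-forms on a K\"ahler manifold, whose curvature term is nonnegative against nonnegative $(1,1)$-forms precisely when the bisectional curvature is nonnegative, yielding a tensor maximum principle that passes to the limit via the uniform estimates.

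Next I would extract the scalar consequence. By the K\"ahler identity $\tr_g\Delta_H\eta=\Delta\tr_g\eta$ for $d$-closed $(1,1)$-forms, the function $h(t,x):=\tr_g\eta(t,x)$ is a nonnegative solution of $(\p_t-\Delta)h=0$ with $h(0,\cdot)=\mathcal{S}$, which coincides by construction with the minimal heat-kernel convolution of $\mathcal{S}$. Splitting $M$ into the dyadic annuli $B_o((k+1)\sqrt t)\setminus B_o(k\sqrt t)$, applying the Li-Yau Gaussian upper bound for the heat kernel on manifolds of nonnegative Ricci curvature and the Bishop-Gromov volume comparison, and using the hypothesis $\frac{1}{V_o(r)}\int_{B_o(r)}\mathcal{S}\,d\mu = o(r^{-2})$ together with dominated convergence yields the decay $t\,h(t,o)\to 0$ as $t\to\infty$.

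The last step is to promote this decay to identical vanishing. I would combine the K\"ahler Li-Yau/Ni monotonicity (for a nonnegative heat solution on a K\"ahler manifold with nonnegative bisectional curvature, $t^{m}h(t,x)$ is nondecreasing in $t$) with the full positivity of the tensor $\eta$: since $\eta(t,x)\ge 0$ as a Hermitian matrix, its operator norm is controlled by its trace $h$, so vanishing of $h$ forces $\eta\equiv 0$ on $M\times(0,\infty)$. The continuity of $\eta$ at $t=0$ on compact sets (inherited from the Dirichlet problems) then forces $\rho\equiv 0$, hence $\Ric\equiv 0$. On a K\"ahler manifold with nonnegative bisectional curvature, $\Ric\equiv 0$ forces every bisectional curvature $R_{\alpha\bar\alpha\gamma\bar\gamma}$ to vanish (each is nonnegative and their sum over $\gamma$ is $\rho_{\alpha\bar\alpha}=0$), and then polarization gives vanishing of the full curvature tensor, so $(M,g)$ is flat.

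The main obstacle is the third step. The scalar decay $t\,h(t,o)\to 0$ is by itself insufficient to force $h\equiv 0$ when the complex dimension $m\ge 2$, since the K\"ahler monotonicity only gives that $t^{m}h(t,o)$ is nondecreasing and hence permits $h$ to decay faster than $1/t$. Closing this gap is the heart of the matter and requires exploiting the full $(1,1)$-form Bochner structure: either via a tensor Li-Yau monotonicity for $\eta_{\abb}$ itself, or via the relative monotonicity alluded to in the abstract, which compares $\eta(t,\cdot)$ to its own heat evolution at shifted time. This is the essential payoff of solving the Cauchy problem at the level of positive $d$-closed $(1,1)$-forms rather than working only with scalar heat solutions, and is what lets one bypass (\ref{extra}) entirely.
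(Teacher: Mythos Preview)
Your overall architecture matches the paper's: solve the Hodge--Laplacian heat equation for $(1,1)$-forms with initial data $\rho=\Ric$, obtain a solution that is simultaneously $d$-closed and positive, pass to the trace $u=\Lambda\eta$, show $tu(o,t)\to 0$ from the hypothesis, and close with a monotonicity. Two points need correction, and the second one in fact dissolves the obstacle you flag.

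\textbf{Boundary condition.} Solving the Cauchy--Dirichlet problem on $B_o(R_k)$ will not let you conclude $d$-closedness: the argument ``immediate from $[d,\Delta_H]=0$'' is only valid on closed manifolds. On a domain with boundary, the energy $I(t)=\int_\Omega |d\eta|^2$ satisfies $I'(t)=-\int_\Omega|\delta d\eta|^2$ only after an integration by parts whose boundary term involves $\operatorname{\bf n}(d\eta)$, and Dirichlet data on $\eta$ gives you no control over this. The paper's key technical move is to impose the \emph{absolute boundary condition} $\operatorname{\bf n}\eta=\operatorname{\bf n}d\eta=0$, which is designed precisely so that these boundary terms vanish and $d$-closedness is preserved. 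The price is that preserving \emph{positivity} under the absolute condition is no longer routine; this is why the paper develops a generalized tensor maximum principle (its Section~2) handling this mixed-type boundary condition. Your sketch treats both properties as easy, but the paper emphasizes that obtaining them \emph{simultaneously} is exactly the hard part, and the choice of boundary condition is what makes it work.

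\textbf{The monotonicity.} You quote the scalar Li--Yau consequence that $t^{m}h(t,x)$ is nondecreasing, and correctly observe this is too weak. But your speculation in the last paragraph is in fact the answer, and it is already available: the differential Harnack estimate of \cite{Ni-JAMS} (in the form of \cite{NN}, which removes growth assumptions) applies to $\eta$ precisely because $\eta$ is a $d$-closed positive $(1,1)$-form, and taking $X=0$ in that estimate yields the much sharper statement that $t\,u(x,t)$ is nondecreasing in $t$. Combined with $t\,u(o,t)\to 0$ this forces $u(o,t)\equiv 0$, hence $u\equiv 0$ by the strong maximum principle, and the argument closes with no residual obstacle. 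The ``relative monotonicity'' in the abstract is used for the stronger positive-mass corollary, not for the gap theorem itself.
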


Before we describe the proof we first explain briefly the approaches adapted in the previous works of \cite{MSY}, \cite{N98}, \cite{nst} and \cite{NT-jdg}. A key common ingredient used in the works of  \cite{MSY}, \cite{N98}, \cite{nst} and \cite{NT-jdg} is to solve the so-called Poincar\'e-Lelong equation $\sqrt{-1}\partial \bar{\partial} u=\rho$,  for a given $d$-closed real $(1, 1)$-form $\rho$ and then show that $\operatorname{trace}(\rho)=0$ using (\ref{2-decay}). In \cite{NT-jdg}, the following result was  proved on solving the Poincar\'e-Lelong equation.

\begin{theorem}\label{pl}
Let $M^m$ be a complete K\"ahler manifold
with nonnegative holomorphic bisectional curvature.  Let $\rho$ be
a real $d$-closed $(1,1)$  form with trace $f$. Suppose $f\ge0$ and
$\rho$ satisfies  the following conditions:
\begin{equation}\label{ass1}
 \int_0^\infty  \left(\aint_{B_o(s)}||\rho||(y)d\mu(y)\right) ds<\infty,
\end{equation}
 and
 \begin{equation}\label{extr2}
\liminf_{r\to\infty}\lf[\exp\lf(-ar^2\ri)\cdot\int_{B_o(r)}||\rho||^2(y) d\mu(y)\ri]<\infty
\end{equation}
for some $a>0$.
 Then there is a solution $u$ of
the Poincar\'e-Lelong equation $\sqrt{-1}\ddbar u =\rho$.
Moreover, for any $0<\epsilon<1$, $u$ satisfies
\begin{eqnarray}\label{bound}
  \alpha_1r\int_{2 r}^\infty k(s)ds+\beta_1\int_0^{2r}sk(s)ds
 &\ge& u(x)\ge \beta_3\int_{0}^{2r}sk(s)ds\nonumber \\
&\quad&\quad \quad -\alpha_2r\int_{2r}^\infty k(s)ds-\beta_2\int_0^{\epsilon
r}sk(x,s)ds
\end{eqnarray}
for some positive constants $\alpha_1(m)$, $\alpha_2(m,\epsilon)$
and $\beta_i(m)$, $1\le i\le 3$, where $r=r(x)$. Here  $k(x,s)= \aint_{B_x(s)}f$ and $k(s)=k(o,s)$, where $o\in M$
is a fixed point.
\end{theorem}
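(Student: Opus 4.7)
The plan is to adapt the heat-equation strategy developed in \cite{MSY}, \cite{N98}, and \cite{NT-jdg}. First, solve the scalar heat equation $(\partial_t-\Delta)v=0$ with initial value $f=\operatorname{tr}\rho\ge 0$. Since nonnegative bisectional curvature forces $\operatorname{Ric}\ge 0$, the parabolic maximum principle gives $v(x,t)\ge 0$, and the Li--Yau two-sided heat-kernel bounds yield $v(x,t)$ comparable, up to universal constants, to the spatial mean $k(x,\sqrt t)$ of $f$ on $B_x(\sqrt t)$. Rewriting (\ref{ass1}) after the substitution $s=\sqrt t$, and comparing $k(x,s)$ with $k(o,s)$ for $s\gtrsim r(x)$ via Bishop--Gromov volume comparison, guarantees that
\begin{equation*}
u(x):=\int_0^\infty v(x,t)\,dt
\end{equation*}
converges. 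Differentiation under the integral, justified by the same decay estimates, produces a Poisson identity relating $\Delta u$ to $f$, and splitting the time integral at $t\sim r^2$ and using the comparison $v(x,t)\asymp k(x,\sqrt t)$ on each piece delivers the upper and lower bounds (\ref{bound}).

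Upgrading this Poisson identity to the full Poincar\'e--Lelong equation $\sqrt{-1}\partial\bar\partial u=\rho$ requires that the form structure of $\rho$ be transported by the flow. Solve the Hodge--Laplace heat equation $(\partial_t-\Delta_H)\Psi=0$ on real $(1,1)$-forms with $\Psi(\cdot,0)=\rho$. Under nonnegative bisectional curvature the Bochner--Kodaira formula shows that the cone of positive $(1,1)$-forms is preserved, so $\Psi(\cdot,t)\ge 0$; $d$-closedness is preserved automatically; and on a K\"ahler manifold the trace of a $d$-closed $(1,1)$-form commutes with $\Delta_H$, so $\operatorname{tr}\Psi(\cdot,t)$ solves the scalar heat equation with initial value $f$ and, once uniqueness is established, agrees with $v$. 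Since $\Psi$ remains closed, $\Delta_H\Psi=dd^\ast\Psi$ is $d$-exact, and the K\"ahler identities express it as $\sqrt{-1}\partial\bar\partial(\operatorname{tr}\Psi)=\sqrt{-1}\partial\bar\partial v$ up to sign. Integrating the form-level heat equation in $t$ then yields
\begin{equation*}
\rho-\Psi(\cdot,t)=\sqrt{-1}\partial\bar\partial\int_0^t v(\cdot,s)\,ds,
\end{equation*}
and passing $t\to\infty$ with $\Psi(\cdot,t)\to 0$ gives the Poincar\'e--Lelong equation.

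The main obstacle is legitimizing this last limit. On a noncompact manifold the Hodge heat Cauchy problem is not uniquely solvable without growth control, and a priori $\Psi(\cdot,t)$ need not decay in $t$. This is precisely where hypothesis (\ref{extr2}) enters: a weighted $L^2$ energy estimate converts the exponential $L^2$ bound on $\|\rho\|$ into a corresponding exponential $L^2$ bound on $\|\Psi(\cdot,t)\|$, which feeds into Karp's $L^2$-uniqueness theorem for the heat equation on complete manifolds. Uniqueness forces $\Psi$ to coincide with the heat-kernel representation, and combining the resulting heat-kernel upper bound for $\Psi$ with (\ref{ass1}) yields the decay $\Psi(\cdot,t)\to 0$ as $t\to\infty$. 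With this in hand, commuting $\sqrt{-1}\partial\bar\partial$ through the $t$-integral and passing to the limit become routine, and the bounds (\ref{bound}) emerge from substituting $v(x,t)\asymp k(x,\sqrt t)$ into the defining integral of $u$ and splitting the range of integration at $t\sim r^2$.
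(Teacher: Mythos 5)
Theorem \ref{pl} is not proved in this paper; it is quoted verbatim from \cite{NT-jdg} purely as background, to explain the Poincar\'e--Lelong approach that the author then abandons in favor of a direct Hodge--Laplace heat-flow argument that avoids hypothesis (\ref{extr2}) altogether. So there is no internal proof here to measure your attempt against. The one clue the paper gives about the \cite{NT-jdg} argument is that (\ref{extr2}) enters through a \emph{maximum principle for sub-solutions} to the heat equation; your route through Karp's $L^2$-uniqueness is plausible but is a genuinely different way of closing the uniqueness step.

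Beyond that, two points in your sketch are gaps rather than routine steps. First, you assert that the Hodge--Laplace flow $\Psi(\cdot,t)$ with $\Psi(\cdot,0)=\rho$ preserves positivity ``by the Bochner--Kodaira formula.'' Establishing simultaneously that positivity and $d$-closedness survive the flow on a noncompact manifold is precisely the technical core of this paper (Sections 2 and 3: exhaustion by bounded domains, absolute boundary conditions, and a generalized tensor maximum principle), so it cannot be waved through; fortunately your derivation of $\sqrt{-1}\partial\bar\partial u=\rho$ never actually uses positivity, only $d$-closedness and decay of $\Psi$, so this claim is extraneous rather than fatal. Second, and more seriously, you claim that (\ref{ass1}) ``guarantees that $u(x):=\int_0^\infty v(x,t)\,dt$ converges.'' Since $v(x,t)\asymp k(x,\sqrt t)$ by Li--Yau, the substitution $s=\sqrt t$ turns this integral into $\asymp\int_0^\infty s\,k(x,s)\,ds$, which carries an extra factor of $s$ relative to (\ref{ass1}) and can diverge (take $k(o,s)\sim s^{-2}$). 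This is exactly why (\ref{bound}) caps the $s\,k(s)$ integral at $2r$ and carries a separate linear-in-$r$ tail term $r\int_{2r}^\infty k(s)\,ds$ rather than a single convergent integral. A correct construction must either work with $w(x,T)=\int_0^T v(x,t)\,dt$ and pass $T\to\infty$ only at the level of $\sqrt{-1}\partial\bar\partial w$, or renormalize $u$ before taking the limit; as written, your $u$ may be $+\infty$ everywhere.
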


The assumption (\ref{extra}) is related to (\ref{extr2}), which in turn due to that a maximum principle for the sub-solutions to   the heat equation,  is needed in the proof of \cite{NT-jdg}.   Assuming that Theorem \ref{pl} can be proved without (\ref{extr2}), then the upper bound estimate  in (\ref{bound}) shows that the solution $u(x)$ of  $\sqrt{-1}\partial\bar{\partial} u=\Ric$ is of $o(\log r(x))$ growth. Now Theorem \ref{main1} follows from the Liouville theorem for plurisubharmonic functions proved in Theorem 0.2 of \cite{NT-jdg}, which asserts that {\it any continuous plurisubharmonic function with upper growth bound  of $o(\log r(x))$ must be a constant}. Since we do not know how to solve the equation without (\ref{extr2}) we do not take this approach here.

 Here we adapt a  different method. The starting point is an alternate argument  of proving the above mentioned Liouville theorem using the monotonicity principle of \cite{Ni-JAMS}. This alternate method makes uses of the asymptotic behavior of the  solution to a parabolic equation to infer geometric/analytic information of the manifolds. This approach via asymptotic study has also succeeded in the recent resolution of the fundamental gap conjecture in \cite{AC}.   The key here is a sharp differential estimate and derived convexity/monotonicity of heat equation deformation of positive $(1,1)$-forms (here we follow the convention of calling nonnegative $(p, p)$-forms positive). Below is an outline of the proof.

First we solve the Cauchy problem:
\begin{eqnarray}\label{cauchy}
\left\{\begin{array}{ll}
\quad &\frac{\partial  }{\partial t} \eta(x, t)+\Delta_{\bar{\partial}} \eta(x, t) =0,\\
\quad& \eta(x,0 )=\Ric(x)\ge 0.\end{array}
\right.
\end{eqnarray}
Here $\Delta_{\bar{\partial}}$ is the Hodge-Laplacian operator, $\Ric(x)$ is the Ricci form of the K\"ahler metric $(M, g)$. Moreover,  we show that there exists  a long time solution $\eta(x, t)$ with $\eta(x, t)\ge0$ on $M \times [0, +\infty)$ such that $\eta(x,t)$ is $d$-closed for any $t$. Let  $u(x, t)=\Lambda \eta\doteqdot g^{i\bar{j}}\eta_{i\bar{j}}$. (Here $\Lambda$ is the contraction with the K\"ahler metric.)  Since $u(x, t)$ is nonnegative and satisfies the heat equation with $u(x, 0)=\mathcal{S}(x)$, it can be expressed in terms of the heat kernel by the uniqueness theorem for nonnegative solutions, Theorem 5.1 of \cite{LY}, keeping in mind that $(M, g)$ has nonnegative Ricci curvature.

 In the second stage of the argument we establish  the large time asymptotics  for $u(x, t)$ using (\ref{2-decay}). The monotonicity of \cite{Ni-JAMS},  derived from a  sharp differential estimate of Li-Yau-Hamilton type (also called differential Harnack estimate) can be applied to $\eta(x, t)$, which particularly implies that $tu(x, t)$ is monotonically nondecreasing  in $t$ for any $x$. This implies Theorem \ref{main1} in the following way:   the assumption (\ref{2-decay})  and the `moment' type estimate, Corollary 3.2 of \cite{N02} implies  that $\lim_{t\to \infty} t u(x_0, t)=0$. Hence the monotonicity and the strong maximum principle  imply that $tu(x, t)\equiv 0$ for any $x$ and  $t>0$. The flatness then follows from $u(x,0)\equiv 0$ which is clear by continuity. Here a key ingredient allowing the application of  the  differential Harnack (also called LI-Yau-Hamilton) estimate to get the needed monotonicity is  that $\eta(x, t)$, the solution obtained in the previous step  is both $d$-closed and positive.

The major technical contribution of this paper is to  solve (\ref{cauchy}), most importantly  obtaining a $d$-closed, positive solution. It is not too difficult  to obtain a $d$-closed solution or to obtain a positive solution alone. See for example Section 11 of \cite{NN} for a construction on obtaining a positive solution. However it is not easy to see why the construction there gives a $d$-closed solution. The difficult part is to obtain a solution satisfying both conditions. To achieve this goal, we have to study the parabolic Hodge-Laplace  problem  on bounded region $\Omega\subset M$ with {\it absolute boundary condition} and prove that the solution obtained is both $d$-closed and positive. The absolute boundary condition is designed to keep the $d$-closeness. It turns out to be a bit subtle to show  that the positivity is preserved. This requires us to extend Hamilton's tensor maximum principle to a much  general setting.

Heuristically, the advantage of the method here over the previous methods of \cite{MSY}, \cite{N98}, \cite{nst}, \cite{NT-jdg} is that by  solving the Cauchy problem for the differential forms, the solution carries more information than  solving a scalar heat equation (or a Poincar\'e-Lelong equation) as in \cite{NT-jdg} and  \cite{MSY, N98, nst}.  Another major extra force of the argument here is  the monotonicity proved in \cite{Ni-JAMS}, derived out of a sharp differential Harnack estimate for the Hermitian-Einstein flow. For our purpose of proving the optimal result we in fact use an improved version of that monotonicity result (in the sense that no growth condition is needed) in a recent joint work \cite{NN}. Hence in the proof here we crucially use two major techniques in the study of Ricci flow, namely the tensor maximum principle and the differential Harnack estimates.  As a consequence of a new relative monotonicity proved in this paper we obtain the following consequence of Theorem \ref{main1} which can be thought as a `positive mass'  result for K\"ahler manifolds of any dimension since the ADM mass in general relativity is just the limit of  scaling-invariant integrals resembling the integration of the scalar curvature over balls in an asymptotically flat manifold.

\begin{corollary}\label{coro-pm} Let $(M, g)$ be a complete noncompact K\"ahler manifold with nonnegative bisectional curvature, which is not totally flat. Then
$$
\liminf_{r\to \infty} \frac{r^2}{V_o( r)}\int_{B_o( r)}\mathcal{S}(y)\, d\mu(y)>0.
$$
\end{corollary}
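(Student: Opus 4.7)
The plan is to run the heat flow constructed in the proof of Theorem~\ref{main1} and combine the monotonicity from \cite{Ni-JAMS},\cite{NN} with a scale-to-scale (``relative'') comparison. Write
$$
I(r):=\frac{r^2}{V_o(r)}\int_{B_o(r)}\mathcal{S}(y)\,d\mu(y).
$$
By solving the Cauchy problem (\ref{cauchy}) we obtain a $d$-closed, non-negative long-time solution $\eta(x,t)$ to the Hodge--Laplace heat equation, and its trace $u(x,t)=\Lambda\eta(x,t)\ge 0$ solves the scalar heat equation with $u(\cdot,0)=\mathcal{S}$. The Li--Yau--Hamilton type estimate behind the monotonicity of \cite{Ni-JAMS} (in the sharper form of \cite{NN}, which does not require a growth condition) implies that $t\mapsto tu(x,t)$ is nondecreasing for every $x$, so $L(x):=\lim_{t\to\infty} tu(x,t)$ exists in $[0,\infty]$.

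Suppose $M$ is not flat; I would first show that $L(o)>0$. If $L(o)=0$, then monotonicity forces $tu(o,t)\equiv 0$, so $u(o,t)=0$ for every $t>0$. Since $u\ge 0$ solves the scalar heat equation, the strong maximum principle forces $u\equiv 0$ on $M\times(0,\infty)$, hence $\mathcal{S}\equiv 0$ by continuity at $t=0$. But non-negative bisectional curvature together with $\mathcal{S}=\operatorname{tr}\Ric\equiv 0$ forces $\Ric\equiv 0$, and then $R_{i\bar i j\bar j}\equiv 0$ for all $i,j$ (each term is non-negative and the sum in $j$ vanishes), giving flatness -- a contradiction.

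To turn $L(o)>0$ into the desired $\liminf$ lower bound I would use the ``relative'' form of the monotonicity. The starting point is the heat-kernel representation $u(o,t)=\int_M H(o,y,t)\mathcal{S}(y)\,d\mu(y)$, the Li--Yau Gaussian upper bound $H(o,y,t)\le C(m)V_o(\sqrt t)^{-1}\exp(-d(o,y)^2/ct)$, and the Bishop--Gromov volume comparison. A naive application only yields $tu(o,t)\le C\sup_{r\ge \sqrt t}I(r)$, which is too weak because $I$ may oscillate. The role of the relative monotonicity is to compare the increment of $tu(o,t)$ on a time interval $[t_0,T]$ with the spatial average of $\mathcal{S}$ at the corresponding scales $[\sqrt{t_0},\sqrt T]$, yielding an inequality of the schematic shape
$$
Tu(o,T)-t_0\,u(o,t_0)\le C(m)\sup_{\sqrt{t_0}\le r\le \sqrt T} I(r).
$$
Given a sequence $r_k\to\infty$ along which $I(r_k)\to\liminf_{r\to\infty}I(r)$, choose a narrow dyadic window $[\sqrt{t_0(k)},\sqrt{T(k)}]$ around $r_k$; letting $T(k)\to\infty$ and invoking the monotonicity of $tu(o,t)$ yields $L(o)\le C(m)\liminf_{r\to\infty}I(r)$, whence the corollary.

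The main obstacle is the relative monotonicity itself. With the non-negativity and $d$-closedness of $\eta$ from Theorem~\ref{main1} in hand, the differential Harnack/Li--Yau--Hamilton estimate of \cite{Ni-JAMS},\cite{NN} is available, but upgrading the \emph{absolute} statement ``$tu(x,t)$ nondecreasing'' to a \emph{scale-to-scale} comparison requires a judicious choice of test form or backward heat kernel centred at $o$. The analogue in the proof of Theorem~\ref{main1} is the moment estimate, Corollary~3.2 of \cite{N02}, which collapses the entire time integration against $I$ at a single scale; here one must retain information at every intermediate scale so as to pick up $\liminf$ rather than $\limsup$. This is where the bulk of the technical work lies.
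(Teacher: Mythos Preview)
Your diagnosis that a \emph{relative} (scale-to-scale) monotonicity is the missing ingredient is exactly right, and your first step---showing $L(o)=\lim_{t\to\infty}tu(o,t)>0$ unless $M$ is flat---is correct and is the contrapositive of the argument for Theorem~\ref{main1}. The problem lies in the second half.

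The schematic inequality you write down,
\[
Tu(o,T)-t_0\,u(o,t_0)\;\le\; C(m)\sup_{\sqrt{t_0}\le r\le \sqrt{T}} I(r),
\]
even if established, does not yield $L(o)\le C\liminf_{r\to\infty}I(r)$ by the window argument you sketch. If the window around $r_k$ is genuinely narrow, the left-hand side is an increment over a short time interval and carries no information about $L(o)$; if the window is dyadic (fixed ratio $T/t_0$), the supremum on the right need not be close to $I(r_k)$, since $I$ is allowed to oscillate. Either way the conclusion does not follow. This is a real gap, not just a missing detail.

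The paper bypasses the evolved quantity $tu(o,t)$ entirely at this stage. Its relative monotonicity (Theorem~\ref{thm31}) is a direct spatial comparison
\[
I(r_1)\;\le\; C(m)\,I(r)\qquad\text{for all }r_1\le \delta(m)\,r,
\]
from which the corollary is immediate: if $\liminf_{r\to\infty}I(r)=0$, choose $r_k\to\infty$ with $I(r_k)\to 0$; then for every fixed $r_1$ one has $I(r_1)\le C\,I(r_k)\to 0$, so $\mathcal{S}\equiv 0$ and $M$ is flat. The proof of Theorem~\ref{thm31} does not use the Hodge--Laplace flow $\eta(x,t)$ or the LYH estimate for it. Instead it introduces a localized quantity
\[
\mathcal{E}(t)=(t_0-t)\int_M \varphi_{(o,t_1),h}(y,t)\,H(o,y,t_0-t)\,\mathcal{S}(y)\,d\mu(y)
\]
built from the (backward) heat kernel and a cut-off $\varphi$ supported in a ball of radius $h$, and shows $\mathcal{E}'(t)\le 0$ using the Cao--Ni matrix estimate $\nabla_i\nabla_{\bar j}\log H+\tau^{-1}g_{i\bar j}\ge 0$ together with the complex Hessian comparison $\nabla_i\nabla_{\bar j}r^2\le g_{i\bar j}$ and the $d$-closedness of $\rho$. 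Comparing $\mathcal{E}$ at two times and estimating via the Li--Yau two-sided heat-kernel bounds then gives $I(r_1)\le C\,I(r)$.

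So the correction to your plan is: aim for a single-scale comparison $I(r_1)\le C\,I(r)$ rather than an increment bound on $tu(o,t)$; and look for the monotone quantity in the heat-kernel matrix estimate of \cite{CN} applied to the \emph{static} form $\rho$, rather than in the LYH estimate for the evolving $\eta$.
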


 We speculate that  $$\lim_{r\to \infty} \frac{r^2}{V_o( r)}\int_{B_o( r)}\mathcal{S}(y)$$ does exist. This certainly is the case for examples constructed in \cite{Wu-Zheng}.

 Finally we note that in \cite{NT-jdg}, a seemingly weaker assumption is used instead of (\ref{2-decay}). The new relative monotonicity shows that in fact it is equivalent to (\ref{2-decay}). The method of this paper effectively proves the gap result and the above `positive mass' type result for any $d$-closed positive $(1,1)$-forms.

\section{A general maximum principle}

 In this section we generalize the maximum principle   for the parabolic systems (of Hamilton's type \cite{H-86}) to the degenerate parabolic systems, with a  mixed type boundary condition.  It turns out that this is what is necessary for the study of the Hodge-Laplacian heat equation deformation of forms in the next section. Although maximum principle and strong maximum principle have been extensively considered for  parabolic PDEs earlier (cf. for example, \cite{Bony},  \cite{Friedman},  \cite{Hill}, \cite{Po}, \cite{RW} and \cite{Wein}) we  could not find the one fitting into  our application. Hence we include the detailed formulation and proof of one such result here. The formulation benefits from the simplification in \cite{BW} on Hamilton's tensor maximum principle. Despite the simplicity of its proof, it seems to include (sometimes maybe with a straight forward modification of argument)  all the previous known ones.

 Consider $V$, a vector bundle over $M$ (of rank $N$), with a fixed metric $\widetilde{h}$, a time-dependent metric connection
$D^{(t)}$. On $M$, assume that  there are, possibly,  time-dependent metrics $g(t)$ with$\nabla^{(t)}$ being the Levi-Civita connection of $g(t)$.
When the meaning is clear we often omit the supscript $^{(t)}$. Recall that for any smooth section $f$ of $V$, $D^2_{X Y}f=D_X D_Y f-D_{\nabla _X Y} f$.
The main concern here is on a smooth section $f(x, t)$ defined over $M \times [0, T]$, and when $f(x, t)$ stays inside a family of sets $C(t)\subset V$. Correspondingly, if we identify the fiber $V_x$ with $\R^N$, consider the following ODE in $\R^N$
\begin{eqnarray}\label{eq:31-1}
\left\{\begin{array}{ll}
\frac{d\,  f}{d t}&=\phi(f),\cr
f(0)&=f_0.\cr
\end{array}
\right.
\end{eqnarray}
Here $\phi: \R^N \to \R^N$ is a locally Lipschitz map and $f(t)$ is a vector valued (in $\R^N$) function of $t$. Given a closed set $X\in \R^N$, we say that {\it the ODE (\ref{eq:31-1}) preserves the  set $X$} (on $[0, T]$) if for any  smooth solution  $f(t)$ with $f(0)\in X$, $f(t)\in X$ for any $t\in [0, T]$. Recall the concept of the {\it tangent cone} of $X$. For any $p\in X$ we define the tangent cone at $p$,  $T^{\mathcal{C}}_pX$ as the collection of vectors $\xi$ satisfying that for any $x_1\in X^c$, the complement of $X$,  with the property that $\operatorname{dist}(x_1, p)=\operatorname{dist}(x_1, X)$,
\begin{equation}\label{eq:31-2}
\langle \xi, p-x_1 \rangle \ge 0.
\end{equation}
Similarly we say that a vector  {\it $\xi$ is tangent to $X$} (and denote all such vectors by $T_pX$) if
\begin{equation}\label{eq:31-3}
\langle \xi, p-x_1 \rangle =0
\end{equation}
for any $x_1\in X^{c}$ with $\operatorname{dist}(x_1, p)=\operatorname{dist}(x_1, X)$. Without any effort one can define similar concepts for a closed set $X$ in a Riemannian manifold. One simply replaces $p-x_1$ by $-\gamma'(0)$, where $\gamma(s)$ is a length minimizing geodesic joining from $p$ to $x_1$.

  \begin{theorem}\label{thm-max} Assume that $M$ is a compact manifold with boundary and  $\phi: V \to V$ is locally Lipschitz. Let $C(t)\subset V$,   $t\in [0, T]$,  be a family of closed subset, depending continuously on $t$. Suppose that for each $t$, $C(t)$ is invariant under parallel transport, fiberwise convex and that the family $\{C(t)\}$ is preserved by the ODE (\ref{eq:31-1}). Consider $f(x, t)$, a family of smooth sections of $V$ on $[0, T]$. Assume  that for any $t>0$ with $\mathcal{D}(t)\doteqdot \max_{y\in M} \operatorname{dist}(f(y, t), C_y(t))>0$, and any $x$ satisfying  $\operatorname{dist}(f(x, t), C_x(t))=\mathcal{D}(t)$, there locally (near $x$) exist vector fields $X_i$ with $1\le i\le k$ and $Y_j$ with $1\le i\le l$, for some nonnegative integers $k, l$, such that
\begin{equation}\label{assumption1}
\frac{\partial f}{\partial t}-\sum_{i=1}^k D^2_{X_i X_i}f-\sum_{j=1}^l D_{Y_j}f -\phi(f)\in T_{v(x, t)}^{\mathcal{C}} C_x(t),\end{equation} where $v(x, t)\in C_x(t)$ is  the vector in $V_x$ with $\operatorname{dist}(f(x, t), v(x, t))=\operatorname{dist}(f(x, t), C_x(t))$. Assume also on    $(x, t)\in  \partial M\times[0, T]$, if  $\operatorname{dist}(f(x, t), C_x(t))= \mathcal{D}(t)>0$, the exterior normal derivative $D_{\nu}f (x, t)\in T_{v(x, t)}^{\mathcal{C}} C_x(t)$, where $v(x, t)\in C_x(t)$ is  as above.  Then $f(x, t)\in C(t)$ for all $t>0$ given $f(x, 0)\in C(0)$.
\end{theorem}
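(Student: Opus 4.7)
The plan is to control the ``defect'' $\rho(x,t) := \operatorname{dist}(f(x,t), C_x(t))$, which is well-defined and continuous, with a unique projection $v(x,t)\in C_x(t)$ whenever $\rho(x,t)>0$ thanks to fiberwise convexity of $C(t)$. The goal is to show $\mathcal{D}(t) \equiv 0$ on $[0,T]$ via a perturbation/first-crossing argument: fix $\delta>0$, pick $\Lambda > L$ where $L$ is a Lipschitz constant for $\phi$ on a compact neighborhood of the image of $f$, and rule out a first time $t_0>0$ at which $\mathcal{D}(t_0)=\delta e^{\Lambda t_0}$. Letting $\delta\downarrow 0$ finishes.

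At such a first-crossing time let $x_0\in M$ realize the maximum and set $v_0:=v(x_0,t_0)$, $w:=f(x_0,t_0)-v_0$. In the interior case, parallel transport $v_0$ along $g(t_0)$-radial geodesics from $x_0$ with $D^{(t_0)}$ to obtain a local section $\widetilde v$ of $V$ with $\widetilde v(x_0)=v_0$ and $(D\widetilde v)(x_0)=0$. Parallel-transport invariance of $C_x(t_0)$ gives $\widetilde v(x)\in C_x(t_0)$ locally, so the comparison function $F(x,t):=\tfrac12|f(x,t)-\widetilde v(x)|_{\widetilde h}^2$ satisfies $F(x,t_0)\ge \tfrac12 \rho(x,t_0)^2$ near $x_0$ with equality at $x_0$. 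Hence $F(\cdot,t_0)$ has a local spatial maximum at $x_0$, and a Dini/continuity argument for the $t$-dependent family $\{C(t)\}$ gives $\partial_t^- F(x_0,t_0) = \langle w, \partial_t f\rangle \ge \Lambda\delta^2 e^{2\Lambda t_0}$.

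The first- and second-derivative tests at the interior spatial maximum, together with $D\widetilde v(x_0)=0$, translate into $\langle w, D_{Y_j}f\rangle = 0$ and $\sum_i \langle w, D^2_{X_i X_i} f\rangle \le 0$. Pairing hypothesis (\ref{assumption1}) with $w$, and noting that $\xi\in T^{\mathcal{C}}_{v_0}C_{x_0}(t_0)$ forces $\langle w,\xi\rangle\le 0$ by the very definition (\ref{eq:31-2}), yields
\[
\langle w, \partial_t f\rangle \;\le\; \textstyle\sum_i \langle w, D^2_{X_i X_i} f\rangle + \sum_j \langle w, D_{Y_j} f\rangle + \langle w, \phi(f)\rangle.
\]
The standard Nagumo--Brezis characterization of sets preserved by locally Lipschitz ODE forces $\phi(v_0)\in T^{\mathcal{C}}_{v_0}C_{x_0}(t_0)$, giving $\langle w,\phi(v_0)\rangle\le 0$, and Lipschitz continuity of $\phi$ then bounds $\langle w,\phi(f)\rangle\le L|w|^2 = L\delta^2 e^{2\Lambda t_0}$. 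Combined with the Dini lower bound, this produces $\Lambda\delta^2 e^{2\Lambda t_0} \le L\delta^2 e^{2\Lambda t_0}$, contradicting $\Lambda>L$.

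The boundary case $x_0\in\partial M$ is where I expect the main obstacle; it is precisely what the hypothesis $D_\nu f(x_0,t_0)\in T^{\mathcal{C}}_{v_0}C_{x_0}(t_0)$ is designed to handle, since pairing with $w$ gives $D_\nu F(x_0,t_0) = \langle w, D_\nu f\rangle \le 0$, ruling out inward-normal increase of $F$ and letting the interior derivative tests go through. The principal technical subtlety I anticipate is this boundary-to-interior reduction, together with justifying the Dini inequality in $t$ when $C(t)$ is genuinely moving (so that $v_0\in C_{x_0}(t_0)$ is only approximately in $C_{x_0}(t)$ for nearby $t$). The remaining ingredients --- compatibility of the parallel transport with $D^{(t_0)}$ and the $g(t_0)$-geodesic structure so that $D\widetilde v(x_0)=0$, harmlessness of the degeneracy because the $D^2$ term is used only with its favorable sign, and the equivalence between ODE invariance and $\phi|_{\partial C}\subset T^{\mathcal{C}}C$ --- are standard.
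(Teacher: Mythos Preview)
Your overall strategy matches the paper's: both follow the B\"ohm--Wilking simplification of Hamilton's tensor maximum principle, controlling the maximal distance $\mathcal{D}(t)$ to $C(t)$ and exploiting parallel-transport invariance via an argument equivalent to your parallel-transported comparison section $\widetilde v$. The paper works with $\rho(t)=\mathcal{D}(t)^2$ and establishes the Dini inequality $D_-\rho\le 2L\rho$ directly (then invokes an elementary ODE comparison lemma), rather than your first-crossing barrier $\delta e^{\Lambda t}$, but this difference is cosmetic.

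There is, however, a genuine gap in your handling of the time-dependence of $C(t)$, and it surfaces in two places that you treat separately but which are really one issue. First, your Dini lower bound $\partial_t F(x_0,t_0)=\langle w,\partial_t f\rangle\ge \Lambda\delta^2e^{2\Lambda t_0}$ does not follow from the first-crossing condition: the crossing controls $\mathcal{D}(t)$, not $F(x_0,t)=\tfrac12|f(x_0,t)-v_0|^2$, and since $v_0\in C_{x_0}(t_0)$ need not lie in $C_{x_0}(t_0-h)$, there is no inequality between $F(x_0,t_0-h)$ and $\tfrac12\mathcal{D}(t_0-h)^2$ in the direction you need. Second, the Nagumo--Brezis characterization gives $\phi(v_0)\in T^{\mathcal C}_{v_0}C_{x_0}(t_0)$ only for a \emph{fixed} invariant set; when the family $\{C(t)\}$ is genuinely moving, ODE-preservation of the family does \emph{not} force $\langle w,\phi(v_0)\rangle\le 0$ at a fixed time slice. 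Hausdorff continuity of $t\mapsto C(t)$ gives only $o(1)$ control, not the $O(h)$ control the Dini argument requires.

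The paper resolves both points with a single device you did not use: rather than freezing $v_0$, it takes the nearest point $v(t-h)\in C_{x_0}(t-h)$ and pushes it \emph{forward} by the ODE for time $h$, obtaining $v(t-h)+h\,\phi(v(t-h))$, which lies within $O(h^2)$ of $C_{x_0}(t)$ precisely because the ODE preserves the moving family. Estimating $\rho(t)-\rho(t-h)$ against this flowed point produces the term $\langle \partial_t f-\phi(v_\infty),\,w\rangle$ directly; the $-\phi(v_\infty)$ contribution then combines with $+\phi(f)$ from the PDE relation to yield $\langle \phi(f)-\phi(v_\infty),w\rangle\le L|w|^2$ by Lipschitz continuity, with no separate Nagumo-type appeal required. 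Your spatial and boundary analyses are otherwise in line with the paper's Lemmas~\ref{lemma33} and~\ref{lemma34}.
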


  We say that {\it $C(t)$ depends continuously on $t$} if $C(t)$ as a family of sets parametrized by $t$  is continuous in $t$ with respect to the pointed Hausdorff topology.  The assumption {\it the ODE (\ref{eq:31-1}) preserves the set $C(t)$}  means that for every $x\in M$, if $V_x$ is the fiber of $V$ over $x$, then $C_x(t)=V_x\cap C(t)$ is preserved by (\ref{eq:31-1}).

We remark that our formulation has several advantages. First it applies to the degenerate parabolic systems. Secondly, it applies to some  mixed type boundary value problems. Thirdly it does not require that $f(x, t)$ satisfies a partial differential relation (\ref{assumption1}) everywhere, but only on the extremal points (namely those $x$ with $\operatorname{dist}(f(x, t), C_x(t))=\mathcal{D}(t)$). This, for an example, allows one to modify such a result, by allowing non-smooth boundary,  to include the situations  as those considered in \cite{AC}. When $V$ is a trivial line bundle with $C(t)$ being half line $\{z|\, z\ge c\}$, $\phi(f)$ being zero,  the differential relation (\ref{assumption1}) can be replaced by a degenerate nonlinear one such as
$$
\frac{\partial f}{\partial t}-\det\left(D^2_{X_i X_j} f\right) \ge 0.
$$

The following simple lemma is the basic block for the proof of the  theorem.

\begin{lemma}\label{lemma31}   Let  $\rho(t)$ be a continuous function on $[0, b]$.  Assume  that $\rho(0)\le 0$ and there exist some positive constants $\epsilon, C$ such that   $D^{-}\rho \le C \rho$, whenever $0<\rho(t)\le \epsilon$. Then $\rho(b)\le 0$. The same result holds if $D^{-}$ is replaced by $D^{+}$, $D_{-}$ or $D_{+}$.\end{lemma}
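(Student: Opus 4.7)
My plan is to argue by contradiction via a perturbation that compares $\rho$ with a strict super-solution of the linearised ODE $y' = Cy$. Assuming $\rho(b) > 0$, I would pick $\delta > 0$ small enough that $\delta e^{2Cb} < \min(\epsilon, \rho(b))$ and set
\[
\rho_\delta(t) := \rho(t) - \delta e^{2Ct}.
\]
Since $\rho(0) \le 0$ we have $\rho_\delta(0) \le -\delta < 0$, while by construction $\rho_\delta(b) > 0$. Continuity then guarantees that the first zero
\[
t^* := \inf\{t \in [0, b] : \rho_\delta(t) = 0\}
\]
lies in $(0, b)$, with $\rho_\delta < 0$ on $[0, t^*)$.

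At the contact time $t^*$ one has $\rho(t^*) = \delta e^{2Ct^*} \in (0, \epsilon)$, so the hypothesis $D^{-}\rho(t^*) \le C\rho(t^*)$ is applicable. On the other hand, the definition of $t^*$ gives $\rho(t^* - h) \le \delta e^{2C(t^* - h)}$ for all small $h > 0$, hence
\[
\frac{\rho(t^*) - \rho(t^* - h)}{h} \;\ge\; \frac{\delta e^{2Ct^*} - \delta e^{2C(t^* - h)}}{h},
\]
and the right-hand side has a genuine limit $2C\delta e^{2Ct^*} = 2C\rho(t^*)$. Passing to $\limsup_{h \to 0^+}$ yields $D^{-}\rho(t^*) \ge 2C\rho(t^*) > C\rho(t^*)$, contradicting the hypothesis. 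The factor $2C$ in the exponent is precisely what delivers the strict overtake at the contact.

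For the three other Dini derivatives I would keep the same perturbation and simply adjust the contact time according to the side. For $D^{+}$ and $D_{+}$ I take the \emph{last} zero $t^{**} := \sup\{t \in [0, b] : \rho_\delta(t) \le 0\}$, at which $\rho_\delta > 0$ on $(t^{**}, b]$, and estimate the forward difference quotient from below by the corresponding comparison with $\delta e^{2Ct}$ to get $D^{\pm}\rho(t^{**}) \ge 2C\rho(t^{**})$; since the super-solution $\delta e^{2Ct}$ is smooth, its forward difference quotient has a genuine limit, so the $\liminf$ formulation ($D_{+}$) yields the same lower bound as the $\limsup$ one ($D^{+}$). The $D_{-}$ case is symmetric. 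I do not foresee any real technical obstacle here; the lemma is a purely one-variable calculus fact, and the exponential super-solution is tuned so that at any positive contact value its growth rate strictly exceeds $C\rho$, forcing the contradiction uniformly for all four Dini derivatives.
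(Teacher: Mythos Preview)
Your proof is correct. Both you and the paper run a barrier/comparison argument: the paper first renormalizes via $\widetilde\rho(t)=e^{-Ct}\rho(t)$ to reduce to the case $D^-\widetilde\rho\le 0$ and then compares $\widetilde\rho$ against the linear barriers $\eta t+\delta$, while you skip the renormalization and compare $\rho$ directly with the strict exponential super-solution $\delta e^{2Ct}$; the two arguments are essentially the same and differ only in the choice of auxiliary function.
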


 Recall that the upper left derivative $D^{-}f(t)\doteqdot\limsup_{h\to 0, h>0} \frac{1}{h}\left(f(t)-f(t-h)\right)$.
Similarly, $D^+f(t)\doteqdot\limsup_{h\to 0, h>0}\frac{1}{h}\left(f(t+h)-f(t)\right)$ and $D_{+}f$ and $D_{-}f$ are defined by replacing $\limsup$ by $\liminf$.

 \begin{proof} Let $\widetilde{\rho}(t)=e^{-Ct}\rho(t)$. Then $D^{-} \widetilde{\rho}(t)\le 0$ whenever  $0<\widetilde{\rho}(t)\le \epsilon'$, where $\epsilon'=\epsilon e^{Cb}$. We shall show that, via contradiction,  for any $\eta \le \frac{\epsilon'}{2b}$, $\widetilde{\rho}(t)\le \eta t$. This is sufficient  to prove lemma. Assume the otherwise, then there exist $t_0$ such that $\widetilde{\rho}(t_0)>\eta t_0$. Clearly $t_0>0$ and we may choose $\delta>0$ such that $\widetilde{\rho}(t_0)\ge \eta t_0 +\delta$. Choose $\delta$ so small that $\eta b +\delta <\epsilon'$.  Then let $t_1$ be the smallest $t$ such that $\widetilde{\rho}(t)\ge \eta t+\delta$. Note that $0<\widetilde{\rho}(t_1)=\eta t_1 +\delta<\epsilon'$ and $t_1>0$. On the other hand, by the definition of $D^{-} \widetilde{\rho}(t_1)$, for $h$ small, we have that $\widetilde{\rho}(t_1-h)\ge \widetilde{\rho}(t_1)-\frac{\eta}{2}h\ge \eta t_1+\delta -\frac{\eta}{2}h>\eta(t_1-h)+\delta$.
 \end{proof}

The following proposition, which  is well-known to experts,  answers when the
 ODE (\ref{eq:31-1}) preserves a closed set $X$.  The ODE preserving the set is one of the assumptions in the theorem.

\begin{proposition}\label{prop21} Assume that $X$ is a closed set. Suppose further that $\phi$ is locally Lipschitz on $X_\epsilon$, where $X_\epsilon=\{x\, |\, \operatorname{dist}(x, X)\le\epsilon\}$.      Then (\ref{eq:31-1}) preserves $X$ on $[0, T]$ if and only if $\phi(p)\in T^{\mathcal{C}}_pX$ for any $p\in X$.
\end{proposition}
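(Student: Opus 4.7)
The plan is to establish the two implications separately and to use Lemma \ref{lemma31} as the key tool for the sufficient direction.

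For the necessity, fix $p\in X$ and suppose $x_1\in X^c$ realizes the distance from $x_1$ to $X$ at $p$, i.e.\ $\operatorname{dist}(x_1, p)=\operatorname{dist}(x_1, X)$. Let $f(t)$ be the solution of (\ref{eq:31-1}) with $f(0)=p$. Since by hypothesis $f(t)\in X$ for every $t\in[0, T]$, the function $g(t)\doteqdot |f(t)-x_1|^2$ satisfies $g(t)\ge |p-x_1|^2=g(0)$ for $t\ge0$. Hence the right derivative at $t=0$ is nonnegative, which after a direct computation yields $2\langle \phi(p), p-x_1\rangle\ge 0$. Since $x_1$ was arbitrary, this shows $\phi(p)\in T^{\mathcal{C}}_p X$.

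For the sufficiency, let $L$ be the Lipschitz constant of $\phi$ on $X_\epsilon$ and let $f(t)$ be a solution with $f(0)\in X$. Define $\rho(t)\doteqdot \operatorname{dist}(f(t), X)$, which is continuous with $\rho(0)=0$. The goal is to show $\rho\equiv 0$; the plan is to verify the hypothesis of Lemma \ref{lemma31} for the function $\psi(t)\doteqdot \rho(t)^2$. Suppose $0<\rho(t_0)\le\epsilon$, and let $p_0\in X$ attain the distance, so $\operatorname{dist}(f(t_0), p_0)=\operatorname{dist}(f(t_0), X)$ and $f(t_0)\in X^c$. The tangent cone assumption $\phi(p_0)\in T^{\mathcal{C}}_{p_0}X$ then gives $\langle \phi(p_0), p_0-f(t_0)\rangle\ge 0$, equivalently $\langle \phi(p_0), f(t_0)-p_0\rangle\le 0$. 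Now consider the auxiliary function $G(t)\doteqdot |f(t)-p_0|^2$, which majorizes $\psi(t)$ locally and agrees with it at $t_0$. A direct computation gives
\begin{align*}
G'(t_0)&=2\langle \phi(f(t_0)), f(t_0)-p_0\rangle\\
&=2\langle \phi(p_0), f(t_0)-p_0\rangle+2\langle \phi(f(t_0))-\phi(p_0), f(t_0)-p_0\rangle\\
&\le 0+2L|f(t_0)-p_0|^2=2L\,\psi(t_0).
\end{align*}
Since $\psi(t)\le G(t)$ near $t_0$ with equality at $t_0$, it follows that $D^+\psi(t_0)\le G'(t_0)\le 2L\,\psi(t_0)$.

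Applying Lemma \ref{lemma31} to $\psi$ with $C=2L$ and the threshold $\epsilon^2$, we conclude $\psi(t)\le 0$, hence $\rho(t)\equiv 0$ and $f(t)\in X$ for all $t\in[0, T]$, completing the proof. The main obstacle is the sufficiency: the nontrivial point is to extract a usable differential inequality for the (non-smooth) distance function, and this is overcome by the standard trick of majorizing $\rho^2$ by $G(t)=|f(t)-p_0|^2$ with $p_0$ frozen at the closest point at the given instant, combined with the Lipschitz bound on $\phi$ to absorb the cross term left over after the tangent cone inequality eliminates the principal part.
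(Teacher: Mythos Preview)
Your proof is correct and follows essentially the same approach as the paper: both directions use the same auxiliary functions ($\operatorname{dist}^2(f(t),X)$ majorized by $|f(t)-p_0|^2$ for sufficiency, $|f(t)-x_1|^2$ for necessity) and the same tangent-cone plus Lipschitz estimate, with Lemma~\ref{lemma31} closing the argument. One minor point: you write ``let $L$ be the Lipschitz constant of $\phi$ on $X_\epsilon$,'' but $\phi$ is only assumed \emph{locally} Lipschitz there; the paper obtains a uniform $L$ by observing that $f([0,T])$ is compact, so all closest points $p_0$ and the segments $\overline{f(t_0)\,p_0}$ lie in a fixed compact subset of $X_\epsilon$.
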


\begin{proof} We first prove that the condition is sufficient. Let $\rho(t)=\operatorname{dist}^2(f(t), X)$. We shall show that there exists $C>0$ such that, whenever $0<\rho(t)\le \epsilon^2$ ,
$
D^{+} \rho \le C \rho.
$
Since $f([0, T])$ is compact, it is not hard to see that all points $y$ lying on the interval of $\overline{x\, f(t)}$,  where $x$ is over all the points satisfying $\operatorname{dist}(f(t), x)=\operatorname{dist}(f(t), X)$ for the $t$ with  $\rho(t)\le \epsilon^2$,  is contained in a compact subset. Hence, there exist $L>0$ such that
$|\phi(f(t))-\phi(x)|\le L|f(t)-x|$. Now fix $t$ with the property $0<\rho(t)<\epsilon^2$,
and let $x_0$ be the point in $X$ such that $\operatorname{dist}(f(t), X)=\operatorname{dist}(f(t), x_0)$. Compute
\begin{eqnarray*} \limsup_{h\to 0, h>0} \frac{\rho(t+h)-\rho(t)}{h}&\le &\limsup_{h\to 0, h>0} \frac{|f(t+h)-x_0|^2-|f(t)-x_0|^2}{h}
\cr
&=&2\langle \phi(f)(t), f(t)\rangle -2\langle \phi(f)(t), x_0\rangle.
\end{eqnarray*}
Using the assumption that $\langle \phi(x_0), x_0-f(t)\rangle \ge0$ we conclude that
 \begin{eqnarray*} \limsup_{h\to 0, h>0} \frac{\rho(t+h)-\rho(t)}{h}&\le& 2\langle \phi(f)(t), f(t)-x_0\rangle +2\langle \phi(x_0), x_0-f(t)\rangle
\cr
&=&2\langle \phi(f)(t)-\phi(x_0), f(t)-x_0\rangle \cr
&\le& 2L|f(t)-x_0|^2=2L \rho(t).
\end{eqnarray*}
Applying Lemma \ref{lemma31} we have  proved that the assumption $\phi(p)\in T^{\mathcal{C}}_pX$ for any $p$ is sufficient.

In order to see that the condition is necessary we choose a point $x_0$ such that $\phi(x_0)$ does not lie inside the tangent cone $T^{\mathcal{C}}_{x_0}X$. This implies that there exists a $x_1\in X^c$ such that $\operatorname{dist}(x_1, X)=\operatorname{dist}(x_1, x_0)$ and $\langle x_0-x_1, \phi(x_0)\rangle <0$.  Consider a solution $f(t)$ to (\ref{eq:31-1}) with $f(0)=x_0$. Let $\rho(t)=|f(t)-x_1|^2$, then it is easy to see that
\begin{eqnarray*} \limsup_{h\to 0, h>0} \frac{\rho(h)-\rho(0)}{h}&=& 2\langle \phi(f)(0), x_0-x_1\rangle
\cr
&<&0.
\end{eqnarray*}
Hence $\rho(h)<\rho(0)$, which implies that $\operatorname{dist}(x_1, f(h))<\operatorname{dist}(x_1, X)$.  So $f(h)$ is outside $X$ for small $h$.
\end{proof}

\begin{remark}\label{remark32} It is obvious from the proof that if $X$ is time-dependent such that it is decreasing in time in the sense that $X(t')\subset X(t)$ for any $t'\ge t$, the same result holds. \end{remark}

The following lemma can be derived  from \cite{BW}.  We include the argument for the sake of completeness.

\begin{lemma}\label{lemma33} Let $(V, \widetilde{h})$ be a vector bundle with metric $\widetilde{h}$. Let $C\subset V$ be a closed subset such that $C_x=C\cap V_x$ is convex for any $x\in M$. Suppose that $C$ is invariant under the parallel transport with respect to a metric connection $D$ on $V$. If $f(y)$  a section of $V$ satisfies that $f(y)\in C$ for all $y\in M$. Then for any $x\in \operatorname{Int}(M)$, and any $X\in T_xM $ $(D^2_{X X} f)(x)\in T^{\mathcal{C}}_{f(x)}C_{x}$.\end{lemma}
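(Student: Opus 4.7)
The plan is to reduce the statement to a one-dimensional fact about a curve trapped in a closed convex subset of an inner product space, and then invoke the variational characterization of the nearest point projection onto a convex set. Fix $x\in\operatorname{Int}(M)$ and $X\in T_xM$, and choose a geodesic $\gamma\colon(-\delta,\delta)\to M$ with $\gamma(0)=x$ and $\gamma'(0)=X$; this is possible because $x$ is interior. Let $P_{s,0}\colon V_{\gamma(s)}\to V_x$ denote parallel transport along $\gamma$ with respect to $D$, and define the curve
\[
\widetilde f(s):=P_{s,0}\bigl(f(\gamma(s))\bigr)\in V_x.
\]
Since $C$ is invariant under parallel transport and $f(\gamma(s))\in C_{\gamma(s)}$, one has $\widetilde f(s)\in C_x$ for every $s$, with $\widetilde f(0)=f(x)$.

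Next I would identify the elementary derivatives of $\widetilde f$ inside the fixed fibre $V_x$ with the covariant derivatives of $f$. Working in a $D$-parallel frame $\{e_i(s)\}$ along $\gamma$, write $f(\gamma(s))=\sum_i\alpha_i(s)e_i(s)$; then $D_{\gamma'(s)}f=\sum_i\alpha_i'(s)e_i(s)$, whereas $\widetilde f(s)=\sum_i\alpha_i(s)e_i(0)$. Because $\gamma$ is a geodesic, $\nabla_{\gamma'}\gamma'\equiv 0$, hence $D^2_{\gamma'\gamma'}f=D_{\gamma'}D_{\gamma'}f=\sum_i\alpha_i''(s)e_i(s)$. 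Evaluating at $s=0$ yields
\[
\widetilde f'(0)=(D_X f)(x),\qquad \widetilde f''(0)=(D^2_{XX}f)(x).
\]

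For the concluding step, set $p:=f(x)$ and pick any $x_1\in V_x\setminus C_x$ with $\operatorname{dist}(x_1,p)=\operatorname{dist}(x_1,C_x)$. Convexity of $C_x$ together with the nearest-point characterization of $p$ forces $\langle y-p,x_1-p\rangle_{\widetilde h}\le 0$ for every $y\in C_x$. Plugging in $y=\widetilde f(s)$, the scalar function $\sigma(s):=\langle\widetilde f(s)-p,\,x_1-p\rangle_{\widetilde h}$ satisfies $\sigma(s)\le\sigma(0)=0$ on $(-\delta,\delta)$, so $s=0$ is a local maximum and $\sigma''(0)\le 0$. Since $\sigma''(0)=\langle\widetilde f''(0),\,x_1-p\rangle_{\widetilde h}=\langle(D^2_{XX}f)(x),\,x_1-p\rangle_{\widetilde h}$, we obtain $\langle(D^2_{XX}f)(x),\,p-x_1\rangle_{\widetilde h}\ge 0$, which is precisely the defining inequality for $(D^2_{XX}f)(x)\in T^{\mathcal C}_{f(x)}C_x$.

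The only place where a slip is plausible is the identification $\widetilde f''(0)=(D^2_{XX}f)(x)$; once a $D$-parallel frame along $\gamma$ is fixed this reduces to bookkeeping, and the rest of the argument is a one-line application of the first-order characterization of the metric projection onto a closed convex set. Notably, no smoothness or non-degeneracy assumption on the boundary of $C_x$ is needed, which is exactly why the formulation via the tangent cone $T^{\mathcal C}$ is well-suited for the subsequent application to forms satisfying an absolute boundary condition.
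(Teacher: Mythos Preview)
Your argument is correct and follows essentially the same route as the paper: restrict to a geodesic through $x$, parallel-transport everything into the fixed fibre $V_x$ so that the problem becomes a curve $\widetilde f(s)$ trapped in the convex set $C_x$, identify $\widetilde f''(0)$ with $(D^2_{XX}f)(x)$ via a $D$-parallel frame along the geodesic, and conclude by the second-derivative test on the scalar function $\langle\widetilde f(s)-p,\,x_1-p\rangle$. The only cosmetic difference is that the paper works in normal coordinates with a radially parallel frame and writes the curve as $G(s)\in\widetilde C\subset\mathbb{R}^N$, while you phrase the same computation intrinsically via parallel transport and quote the variational characterization of the metric projection directly.
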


 \begin{proof}   Pick a point $o\in \operatorname{Int}(M)$. Choose a normal coordinate $(x_1,  \cdots, x_n)$ near $o$ such that $\frac{\partial}{\partial x^1}|_{o}=\frac{X}{|X|}$. Assume that $\{E_a\}$ (for $1\le a\le N$ with $N=\operatorname{rank}(V)$) is a basis of $V$ near $o$ so that $E_a$ is parallel along any radial direction. Write $f(x)=\sum \chi_a(x) E_a(x)$. Then since $(x_1, \cdots, x_n)$ is normal, $D^2_{11}f(o)=D_1 D_1 f(o)=\sum_{a=1}^N (\nabla_1 \nabla_1 \chi_a)(o)E_a(o).$ Since $\gamma(s)= (s, 0, \cdots, 0)$ with $-\delta_1\le s\le \delta_1$ for some small $\delta_1>0$  is a short geodesic and if we restrict $f$ on this curve we have $f(s)=\sum \chi_a(s) E_a(s)$ and $(D_1D_1 f)(o)=\sum \chi''_a(0) E_a(o)$. The assumption that $C$ is invariant under the parallel transport amounts to that if we identify $C_o$ with a set $\widetilde{C}\subset \R^N$, via the basis $\{E_a(o)\}$, then the set $C_{\gamma(s)}$ can be identified with the same  $\widetilde{C}$ via the basis $\{E_a(s)\}$. Hence the assumption that $f(x)\in C$ implies that $G(s)=(\chi_1(s), \cdots, \chi_N(s))$ is a path inside $\widetilde{C}$. We shall show that $G''(0)$ lies inside $T^{\mathcal{C}}_{G(0)}\widetilde{C}$. Suppose that $q\in \R^N$ satisfies $\operatorname{dist}(q, G(0))=\operatorname{dist}(q, \widetilde{C})$. By convexity, $\eta G(s)+(1-\eta) G(0)\in \widetilde{C}$ for any $\eta>0$. From the fact that
$
|\eta(G(s)-G(0))+(G(0)-q)|^2 \ge |G(0)-q|^2
$
it is easy to see that $\langle G(s)-G(0), G(0)-q\rangle \ge 0$.
 Hence $I(s):=\langle G(s)-G(0), G(0)-q\rangle\ge 0$ for all $s\in [-\delta_1, \delta_1]$ with $I(0)=0$. This by calculus implies that $\langle G''(0), G(0)-q\rangle \ge 0$. Namely $G''(0)\in T^{\mathcal{C}}_{G(0)}\widetilde{C}$. This shows that $(D_1D_1 f)(o)\in T^{\mathcal{C}}_{f(o)}C_{o}$. \end{proof}

When $M$ is a manifold with boundary we also need the following lemma to handle the boundary points.

\begin{lemma}\label{lemma34} Let $M, (V, \widetilde{h})$, $C$,  and $f(y)$ be as in Lemma \ref{lemma33}. Assume that $M$ is a closed manifold with smooth boundary $\partial M$,  $x\in \partial M$, and at $x$,   $D_{\nu} f(x)\in T^{\mathcal{C}}_{f(x)}C_x$. Here $\nu$ is the exterior unit normal at $x$. Then the same conclusion as Lemma \ref{lemma33} holds.
 \end{lemma}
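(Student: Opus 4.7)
The plan is to reduce Lemma~\ref{lemma34} to the two-sided argument of Lemma~\ref{lemma33} by choosing appropriate curves issuing from the boundary point $x$, and then to exploit the fact that, since $C_x$ is convex, $T^{\mathcal{C}}_{f(x)} C_x$ is a closed convex cone (an intersection of half-spaces $\{\xi : \langle \xi, f(x)-x_1\rangle \ge 0\}$), hence closed under non-negative linear combinations. Fix $X \in T_x M$ and decompose $X = X^\top + a\nu$ with $X^\top \in T_x\partial M$ and $a = \langle X,\nu\rangle$; since $D^2_{XX}f = D^2_{(-X)(-X)}f$, we may assume $a \le 0$.

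For the \emph{tangential case} $a=0$, work in Fermi coordinates for which $\partial M = \{r = 0\}$, $M = \{r \ge 0\}$, and $q^1, \ldots, q^{n-1}$ are intrinsic normal coordinates of $\partial M$ at $x$. Choose any $C^2$ curve $\gamma:(-\delta,\delta)\to M$ with $\gamma(0)=x$, $\dot\gamma(0)=X$ (for example $\gamma(s) = (q_0 + sX,\, cs^2/2)$ with $c$ sufficiently large). Since $r\circ\gamma \ge 0$ attains its minimum at $s=0$, its second derivative there is non-negative; combined with the normal-coordinate choice this forces $\nabla_X X|_{s=0} = b\nu$ with $b \le 0$. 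Parallel-transport a basis $\{E_a(0)\}$ of $V_x$ along $\gamma$; because $C$ is invariant under parallel transport, the trivialization $G(s) = (\chi_1(s),\ldots,\chi_N(s))$ of $f(\gamma(s))$ satisfies $G(s)\in \widetilde{C}$ for all $s\in(-\delta,\delta)$. The convexity argument of Lemma~\ref{lemma33} then gives $G''(0) \in T^{\mathcal{C}}_{G(0)} \widetilde{C}$, i.e.\ $\sum_a \chi_a''(0) E_a(0) \in T^{\mathcal{C}}_{f(x)} C_x$. Writing $D^2_{XX}f(x) = \sum_a \chi_a''(0) E_a(0) + (-b)D_\nu f(x)$, the boundary hypothesis $D_\nu f(x) \in T^{\mathcal{C}}_{f(x)} C_x$ together with $-b \ge 0$ and the closed convex cone property yields $D^2_{XX}f(x) \in T^{\mathcal{C}}_{f(x)} C_x$.

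For the \emph{inward case} $a<0$, use the ambient geodesic $\gamma(s) = \exp_x(sX)$, which lies in $M$ for $s \in [0, \delta)$, and define $G(s)$ in a basis parallel along $\gamma$. For any $q \in V_x \setminus C_x$ with $f(x)$ as nearest point in $C_x$, set $I(s) := \langle G(s) - G(0), G(0) - q\rangle$. The convexity argument gives $I \ge 0$ on $[0,\delta)$ with $I(0) = 0$, so $I'(0) \ge 0$. The first-derivative content of the tangential case (applied to each $Y\in T_x\partial M$, using the two-sided minimum there) gives $\langle D_Y f(x), f(x) - q\rangle = 0$; combining this with $\langle D_\nu f(x), f(x) - q\rangle \ge 0$ (boundary hypothesis) and $a<0$ produces $I'(0) = a\langle D_\nu f(x), f(x) - q\rangle \le 0$, hence $I'(0) = 0$. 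A one-sided Taylor expansion of the $C^2$ function $I$ then forces $I''(0) \ge 0$; since $\gamma$ is an ambient geodesic, $\nabla_{\dot\gamma}\dot\gamma \equiv 0$ along $\gamma$, so $I''(0) = \langle D^2_{XX} f(x), f(x) - q\rangle$, and the arbitrariness of $q$ yields $D^2_{XX}f(x) \in T^{\mathcal{C}}_{f(x)} C_x$.

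The main obstacle is the tangential case, where one must absorb the second fundamental form contribution to $\nabla_X X$ into the boundary term $(-b)D_\nu f$. This absorption works precisely because the curve being forced to lie in $M$ on both sides of $s=0$ produces the correct sign $b \le 0$, and $T^{\mathcal{C}}_{f(x)} C_x$ is closed under non-negative combinations. Once the tangential statement is in hand, the inward case reduces to a one-sided Taylor argument and the outward case $a>0$ follows by replacing $X$ with $-X$.
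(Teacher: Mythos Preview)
Your argument is sound in outline, but the tangential case has a gap in the justification of $b\le 0$. You assert that $(r\circ\gamma)''(0)\ge 0$ together with the normal-coordinate choice forces the acceleration $A:=\nabla_{\dot\gamma}\dot\gamma|_{s=0}$ to equal $b\nu$ with $b\le 0$. The coordinate choice (intrinsic normal coordinates on $\partial M$, linear $q$-component for your explicit curve) does make $A$ purely normal, but the inequality $(r\circ\gamma)''(0)\ge 0$ only yields $b\le \operatorname{Hess}_r(X,X)$, and $\operatorname{Hess}_r(X,X)|_{\partial M}$ is (up to sign) the second fundamental form of $\partial M$, which has no prescribed sign. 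The correct reason $b\le 0$ holds for your explicit curve is precisely the stipulation ``$c$ sufficiently large'': a direct computation gives $-b=c+\Gamma^r_{ij}X^iX^j$, which is positive once $c$ dominates the Christoffel term. With this correction the tangential case goes through, and your inward case is handled correctly.

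For comparison, the paper organizes the proof in the opposite order. It first treats the purely normal direction via the inward geodesic and, combining the one-sided inequality $I'(0)\ge 0$ with the boundary hypothesis, upgrades the conclusion to the stronger statement that $D_\nu f(x)$ is \emph{tangential} to $C_x$ (that is, $\langle D_\nu f,\,f(x)-q\rangle=0$ for every admissible $q$). Tangential directions are then handled with curves lying inside $\partial M$, and the second-fundamental-form correction $D_{\nabla_{\gamma'}\gamma'}f$ is eliminated using this tangentiality of $D_\nu f$ rather than a sign on the coefficient. Your device of pushing the tangential curve slightly into the interior and absorbing the acceleration as a nonnegative multiple of $D_\nu f$ is a legitimate alternative that avoids first proving tangentiality; the cost is that the sign of $b$ must be controlled by hand, which is exactly where the write-up slipped.
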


\begin{proof}  Given and $o\in \partial M$,  choose a coordinate as in the proof of Lemma \ref{lemma33} near $o$ such that $\gamma_1(s)=(s, 0,\cdots, 0)$ is a geodesic from $o$ pointing normally inward and $\gamma_2(s)=(0, s, 0, \cdots, 0), \cdots,  \gamma_n(s)=(0, \cdots, 0, s)$ are just curves lying inside $\partial M$ with $\gamma_i(0)=o$. Moreover $\gamma_1(s)$ only defined for $\delta_1\ge s\ge 0$, while $\gamma_i(s)$ is defined for $-\delta_1\le s\le \delta_1$ with $\delta_1>0$ being small. For $\gamma(s)=\gamma_1(s)$ the same argument as in the proof of Lemma \ref{lemma33} shows that $I(s)=\langle G(s)-G(0), G(0)-q\rangle \ge 0$ with $I(0)=0$. Here $G(s)$, $\widetilde{C}$ and $q$ are as in the proof of Lemma \ref{lemma33}, with $\operatorname{dist}(q, G(0))=\operatorname{dist}(q, \widetilde{C})$. This implies that $I'(0)\ge 0$, namely
$$
\langle G'(0), G(0)-q\rangle \ge 0
$$
which is equivalent to
 $-D_\nu f (o)\in T^{\mathcal{C}}_{f(o)}C_o$. By the assumption that $D_{\nu} f(o)\in T^{\mathcal{C}}_{f(o)}C_o$, we conclude that
$$
\langle -D_\nu f(o), f(o)-q\rangle=0,\quad \mbox{ i. e. } \quad I'(0)=\langle G'(0), G(0)-q\rangle  =0.
$$
This is the same as that $D_\nu f(o)$ is tangential to $C_o$.
Here we abuse the notation by using $q$ to denote both the point $q\in \Bbb{R}^N$ with $\operatorname{dist}(q, G(0))=\operatorname{dist}(q, \widetilde{C})$ and its corresponding point in $V_o$.  Since now the Taylor expansion for $I(s)$ is $\frac{1}{2}I''(0)s^2+O(s^3)\ge 0$ which implies that
$$
I''(0)=\langle G''(0), G(0) -q\rangle \ge 0.
$$
This proves that $D_1D_1 f\in T^{\mathcal{C}}_{f(o)}C_o$, consequently $D^2_{11} f\in T^{\mathcal{C}}_{f(o)}C_o$. When $\gamma(s)=\gamma_i(s)$ for $n\ge i\ge 2$, the same argument as in the proof of Lemma \ref{lemma33} proves that
$$
D_i D_i f (o)\in T^{\mathcal{C}}_{f(o)}C_o.
$$
In order to show that $D^2_{ii} f(o)\in T^{\mathcal{C}}_{f(o)}C_o$, observing that
$$
D^2_{ii} f=D_i D_i f -D_{\nabla_{\gamma'}{\gamma'}} f
$$
it suffices to show that $D_{\nabla_{\gamma'}{\gamma'}} f(o)$ is tangential to $C_o$. By the proof of Lemma \ref{lemma33}, for $i\ge 2$, it is easy to see that
$$
I'(0)=\langle G'(0), G(0)-q\rangle =0
$$
which then implies that $D_{\nabla^{\top} _{\gamma'}{\gamma'}} f(o)$ is tangential to $C_o$. Here $\nabla^{\top} _{\gamma'}{\gamma'}$ and $\nabla^{\bot} _{\gamma'}{\gamma'}$ are the tangential and normal component with respect to $\partial M$.  On the other hand $D_{\nabla^{\bot} _{\gamma'}{\gamma'}}f(o)=-\operatorname{II}(\gamma', \gamma')D_{\nu}f(o)$ is tangential to $C_o$, by the above argument for $i=1$, where $\operatorname{II}(\cdot, \cdot)$ is the second fundamental form of $\partial M $ at $o$. Hence $D^2_{ii} f(o)\in T^{\mathcal{C}}_{f(o)}C_o$.

 After showing that $D^2_{ii}f(o)\in T^{\mathcal{C}}_{f(o)}C_o$ for all $i$ with $1\le i\le n$,  to complete the proof, we only need to show that $D^2_{XX}f(o) \in T^{\mathcal{C}}_{f(o)}C_o$ for $X$ which has both non-zero tangential (to $\partial M$) and normal components. Observing that the above argument has proved that for any $Y\in T_oM$, $D_Y f(o)\in T_{f(o)}C_o$, namely $D_Y f(o)$ is tangential to $C_o$. With the above notations we may assume that $X=a\frac{\partial}{\partial x^1}+b\frac{\partial}{\partial x^2}$ for some $a, b\ne 0$. Since it dose not affect the value of $D^2_{XX}f$ when $X$ is replaced by $-X$ we may also assume that $a>0$. Let $\gamma(s)=(as, bs, 0, \cdots, 0)$ with $s\ge 0$. Then $\gamma'(0)=X$ and $\gamma(s)\in M$. Applying the above argument as above again we show that
 $D_{\gamma'}D_{\gamma'} f(o)\in T^{\mathcal{C}}_{f(o)}C_o$. Hence $D^2_{XX} f(o)\in T^{\mathcal{C}}_{f(o)}C_o$.
\end{proof}

\begin{remark} \label{rk21} The proof of Lemmas \ref{lemma33} and \ref{lemma34} implies that, under either assumption,  for any vector $Y\in T_xM$,
$D_{Y}f(x)\in T_{f(x)}C_x$.
\end{remark}

Now we apply Lemma  \ref{lemma33} and Lemma \ref{lemma34} to prove the theorem.
\begin{proof} Here we follow the simplification of \cite{BW} on the proof of Hamilton's tensor maximum principle. First we  can determine a compact subset $K$ (of V)  containing  an $2\epsilon$-neighborhood of $f(M\times 0, T])$, such that $\phi$ is uniformly Lipschitz on $K$ with Lipschitz constant $L$. We also can assume that $\phi$ is bounded by $L$ on $K$. Let $\rho(x, t)=\operatorname{dist}^2(f(x, t), C_x(t))$. Also let $\rho(t)=\max_{x\in M} \rho(x, t)$. The goal is to apply Lemma \ref{lemma31} to $\rho(t)$. For this purpose we  show that for $t$ satisfying that $0<\rho(t)<\epsilon^2$, there exists $C$ such that $D_{-}\rho \le C\rho$.
Let $x_0$ be the point that $\rho(x_0, t)=\rho(t)$. First we consider the case that $x_0$ lies in the interior of $M$.   For $h$ sufficiently small, let $v(t-h)\in C_{x_0}(t-h)$ be the vector such that $\operatorname{dist}(f(x_0, t-h), C_{x_0}(t-h))=\operatorname{dist}(f(x_0, t-h), v(t-h))$.
By the assumption that (\ref{eq:31-1}) preserves $C(t)$, we infer that
$v(t-h)+h\, \phi(v(t-h))$ is a good approximation to a point in $C_{x_0}(t)$ in the sense that
\begin{equation}\label{eq:32-2}
\operatorname{dist}(v(t-h)+h\, \phi( v(t-h)), C_{x_0}(t))\le C_1 h^2. \end{equation}
This can be seen by considering $\widetilde{f}$, a solution to the ODE (\ref{eq:31-1}) (in the fiber $V_{x_0}$) with $\widetilde{f}(t-h)=v(t-h)$. The assumption that (\ref{eq:31-1}) preserves $C(t)$ implies that $\widetilde{f}(t)\in C_{x_0}(t)$. The claimed result follows from the observation that
\begin{equation}\label{eq:32-3}
|v(t-h)+h\, \phi( v(t-h))-\widetilde{f}(t)|\le C_1 h^2, \end{equation}
where $C_1$ depends on $L$. This can be seen fairly easily. It is also easy to see that there exists subsequence of $h_i\to 0$ such that $v(t-h_i)$ converges to, say $v_\infty$. Abusing the notation, we shall write $h\to 0$  even we really take $h_i\to 0$. Since $\operatorname{dist}(v(t-h), C_{x_0}(t-h))-\operatorname{dist}(C_{x_0}(t-h), C_{x_0}(t))\le \operatorname{dist}(v(t-h), C_{x_0}(t))\le \operatorname{dist}(v(t-h), C_{x_0}(t-h))+\operatorname{dist}(C_{x_0}(t-h), C_{x_0}(t))$, taking $h\to 0$, it is clear that $v_\infty \in C_{x_0}(t)$. Moreover, since that
$\operatorname{dist}(f(x_0, t-h), C_{x_0}(t))-\operatorname{dist}(C_{x_0}(t-h), C_{x_0}(t))\le \operatorname{dist}(f(x_0, t-h), C_{x_0}(t-h))=\operatorname{dist}(v(t-h), f(x_0, t-h))\le \operatorname{dist}(f(x_0, t-h), C_{x_0}(t))+\operatorname{dist}(C_{x_0}(t-h), C_{x_0}(t))$, taking $h\to 0$ we deduce that
 \begin{equation}\label{eq:32-4}\operatorname{dist}(v_\infty, f(x_0, t))=\operatorname{dist}(f(x_0, t), C_{x_0}(t)). \end{equation}
Now we can estimate
\begin{eqnarray*}
\rho(t)-\rho(t-h)&\le& \operatorname{dist}^2(f(x_0, t), C_{x_0}(t))-\operatorname{dist}^2(f(x_0, t-h), C_{x_0}(t-h))\cr
&\le& |f(x_0, t)-\widetilde{f}(t)|^2 -|f(x_0, t-h)-v(t-h)|^2\cr
&\le& |f(x_0, t)-(v(t-h)+h\phi(v(t-h)))|^2 \cr
&\quad& -|f(x_0, t-h)-v(t-h)|^2+O(h^2)\cr
&\le& |f(x_0, t)|^2- |f(x_0, t-h)|^2-2\left(\langle f(x_0, t), v(t-h)+h\phi(v(t-h))\rangle\right.\cr
&\quad& \left. +2\langle f(x_0, t-h), v(t-h)\rangle \right)+2\langle v(t-h), \phi(v(t-h))\rangle h +O(h^2).\cr
\end{eqnarray*}
This shows that
\begin{eqnarray*}
 D_{-}\rho(t)& \le& 2 \langle  \frac{\partial f}{\partial t}, f\rangle|_{(x_0, t)} -2\langle \frac{\partial f}{\partial t}|_{(x_0, t)}, v_\infty\rangle -2\langle f(x_0, t)-v_\infty, \phi(v_\infty)\rangle\cr
 &=& 2\langle (\frac{\partial f}{\partial t} -\sum_{i=1}^k D^2_{X_iX_i} f -\sum_{j=1}^l D_{Y_j}f -\phi(f), f(x_0, t)-v_\infty\rangle \cr
&\quad& +2\langle (\sum_{i=1}^k D^2_{X_i, X_i} f)(x_0, t)+(\sum_{j=1}^l D_{Y_j} f)(x_0, t), f(x_0, t)-v_\infty\rangle\cr
&\quad&  +2\langle \phi(f(x_0, t))-\phi(v_\infty), f(x_0, t)-v_\infty\rangle\cr
&\le& 2\langle (\sum_{i=1}^k D^2_{X_i, X_i} f)(x_0, t)+(\sum_{j=1}^l D_{Y_j} f)(x_0, t), f(x_0, t)-v_\infty\rangle\cr
&\quad&  +2\langle \phi(f(x_0, t))-\phi(v_\infty), f(x_0, t)-v_\infty\rangle.
\end{eqnarray*}
Here we have used the assumed partial differential relation (\ref{assumption1}).
To apply  Lemma \ref{lemma33} to the proof of the theorem,  for any $C$  a convex closed subset set of $\Bbb{R}^N$ let $C_\delta=\{v\, | \operatorname{dist}(v, C)\le \delta\}$. We call $C_\delta$ the $\delta$-neighborhood of $C$. Then $C_\delta$ is also a convex closed subset. Suppose that $p\in C^c$ and  $q\in C$ is a point satisfying $\operatorname{dist}(q, C)=\operatorname{dist}(q, p)$. Choose $\delta=\operatorname{dist}(p, q)$. Clearly $p\in C_\delta$. By the convexity it is easy to see that  $C$ is a subset of the half plane
$H=\{ y\, |\langle y-q, q-p\rangle \ge 0\}$.
 Thus $C_\delta \subset H_\delta =\{ v\, | \langle v-p, q-p\rangle \ge 0\}$. Now by abusing the notation  let $C_\delta$ be the subset of $V$ such that $C_\delta\cap V_x$ is the $\delta$-neighborhood of $C\cap V_x$. Then $f(x, t)\in C_{\sqrt{\rho(t)}}$ due to the choice of $x_0$. Moreover $C_{\sqrt{\rho(t)}}$ is invariant under the parallel transport and $C_{\sqrt{\rho(t)}}\cap V_x$ is convex. Now applying Lemma \ref{lemma33} we conclude that
 $$\langle (\sum_{j=1}^k D^2_{X_j X_j} f)(x_0, t), f(x_0, t)-v_\infty\rangle\le 0.$$
 Observing from Remark \ref{rk21}, $\langle (\sum_{j=1}^l D_{Y_j} f)(x_0, t), f(x_0, t)-v_\infty\rangle=0$,  we then arrive at
\begin{eqnarray*} D_{-}\rho(t)& \le& 2\langle \phi(f(x_0, t))-\phi(v_\infty), f(x_0, t)-v_\infty\rangle\cr
&\le &2L |f(x_0, t)-v_\infty|^2\cr &=&2L \rho(t). \end{eqnarray*}

When $x_0\in \partial M$, we replace Lemma \ref{lemma33} by Lemma \ref{lemma34} to obtain the same  estimate as the above. By Lemma \ref{lemma31}, this completes the proof. \end{proof}

\section{The proof of Theorem \ref{main1}}
Let $m=\dim_{\C}(M)$ and $n=2m$ be the real dimension of $M$.
First choose $\{\Omega_\mu\}$ a sequence of    relatively compact smooth exhaustion domains of $M$. We solve the initial-boundary value problem:
\begin{eqnarray}\label{boundary1}
\left\{\begin{array}{ll}
\quad & \left(\frac{\partial  }{\partial t} +\Delta_{\bar{\partial}}\right) \eta_k(x, t) =0, \quad  \mbox{ on } \,  \Omega_\mu \times [0, \infty), \\
\quad & \operatorname{{\bf n}}\eta_\mu= \operatorname{{\bf n}} d\eta_\mu =0, \mbox {on }\partial \Omega_\mu, \\
\quad& \eta_\mu(x,0 )=\Ric(x), \quad \mbox{on} \, \Omega_\mu.\end{array}
\right.
\end{eqnarray}
Here $\Delta_{\bar{\partial}}=\bar{\partial}\bar{\partial}^* + \bar{\partial}^*\bar{\partial} $, with
$\bar{\partial}^*$ being the adjoint of $\bar{\partial}$, the boundary condition is the so-called {\it absolute boundary condition} with $  \operatorname{{\bf n}} \phi= \iota_{\nu}\phi$, where $\nu$ is the exterior unit normal to $\partial \Omega_\mu$. Recall that $\iota_\nu$ is the adjoint operator of $\nu^* \wedge (\cdot) $.
The solvability follows from  the  theory of linear parabolic systems \cite{Friedman, Lady1}. The solvability for the corresponding elliptic problem can be found for example in Theorem 7.8.4 of
Morrey's classics \cite{Morrey}. Note that the Ricci form  $\Ric(x)=\frac{\sqrt{-1}}{2\pi}R_{i\bar{j}}dz^i\wedge d\bar{z}^{j}$ is a closed $(1,1)$-form. The following result asserts that  the solution $\eta_\mu(x, t)$ will preserve both the positivity and the closeness. For the simplicity of the notation we shall omit the subscripts if the meaning is clear.

\begin{proposition}\label{prop41} Assume that $\Omega$ is a smooth bounded domain. Let $\eta(x, t)$ be the unique solution of the initial-boundary value problem (\ref{boundary1}). Then $\eta(x, t)$ will be a real $(1,1)$-form with $\eta(x, t)\ge 0$ and $\eta(x, t)$ being $d$-closed. The same conclusion holds if $\operatorname{Ric}(x)$ is replaced by any $d$-closed positive real $(1,1)$-form .
\end{proposition}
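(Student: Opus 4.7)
The plan is to establish the three claimed properties of $\eta$ in order, regarding reality and $(1,1)$-type as routine and reserving the substance of the argument for $d$-closedness and positivity. Existence and smoothness of $\eta$ on $\Omega\times[0,\infty)$ are granted by the linear parabolic-systems theory already cited, and one can solve the IBVP in the space of real $(1,1)$-forms itself because both $\Delta_{\dbar}$ (equal to $\tfrac{1}{2}\Delta_d$ on a K\"ahler manifold) and the absolute boundary operator preserve this class.

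For $d$-closedness I would apply $d$ to the equation. On a K\"ahler manifold both $\partial$ and $\dbar$ commute with $\Delta_{\dbar}$, hence so does $d$, giving $(\partial_t+\Delta_{\dbar})(d\eta)=0$. The crucial point is that $d\eta$ inherits the full absolute boundary condition: $\operatorname{{\bf n}}(d\eta)=0$ is one of the assumed BCs on $\eta$, while $\operatorname{{\bf n}}(d(d\eta))=0$ is automatic from $d^2=0$. Under the absolute boundary condition, $\Delta_{\dbar}=\tfrac{1}{2}\Delta_d$ is a non-negative self-adjoint operator, so a short integration by parts gives
\begin{equation*}
\frac{d}{dt}\int_{\Omega}|d\eta|^2\,d\mu=-\int_{\Omega}|d^{*}d\eta|^2\,d\mu\le 0,
\end{equation*}
and since $d\eta(\cdot,0)=d\Ric=0$ (and vanishes for any $d$-closed initial datum), one concludes $d\eta\equiv 0$ on $\Omega\times[0,\infty)$.

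For positivity I would invoke Theorem~\ref{thm-max} with $V$ the bundle of real $(1,1)$-forms on $\Omega$ equipped with its natural fiber metric and Chern connection, and with $C_x\subset V_x$ the closed convex cone of positive semi-definite $(1,1)$-forms; $C=\bigcup_x C_x$ is closed, fiberwise convex, $t$-independent, and invariant under parallel transport. The Weitzenb\"ock formula rewrites (\ref{boundary1}) schematically as $\partial_t\eta=\Delta\eta-\phi(\eta)$ with
\begin{equation*}
\phi(\eta)_{i\bar{j}}=R_{i\bar{k}}\eta_{k\bar{j}}+\eta_{i\bar{k}}R_{k\bar{j}}-2R_{i\bar{j}k\bar{l}}\eta_{l\bar{k}},
\end{equation*}
and the standard Hamilton--Mok null-eigenvector computation, using the nonnegative bisectional curvature of $M$, shows that $-\phi(\eta)\in T^{\mathcal{C}}_{\eta}C_x$ for every $\eta\in\partial C_x$; thus the fiber ODE preserves $C_x$ by Proposition~\ref{prop21}. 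The interior partial-differential relation (\ref{assumption1}) is built into our equation with $X_1,\ldots,X_n$ a local orthonormal frame and the $Y_j$ absorbing the first-order correction terms. The remaining boundary hypothesis of Theorem~\ref{thm-max}---that at a point $x_0\in\partial\Omega$ where $\operatorname{dist}(\eta(x_0,t),C_{x_0})$ is maximized one has $D_\nu\eta\in T^{\mathcal{C}}_{v(x_0,t)}C_{x_0}$---unfolds to the pointwise inequality $(D_\nu\eta)(w,\bar{w})\ge 0$ for every negative eigendirection $w$ of $\eta(x_0,t)$. To verify this I would work in a local frame adapted to $\partial\Omega$, use $\operatorname{{\bf n}}\eta=0$ to kill the mixed normal--tangential entries of $\eta$, and combine with the already-established $d\eta=0$ to trade normal derivatives of $\eta$ for tangential ones, leaving only a contribution of the second fundamental form of $\partial\Omega$ paired against a non-negative quantity.

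The main obstacle is precisely this boundary verification. The negative eigendirection $w$ may have arbitrary tangential and normal components, and extracting $(D_\nu\eta)(w,\bar{w})\ge 0$ requires a coordinated use of both halves of the absolute boundary condition---$\operatorname{{\bf n}}\eta=0$ and $\operatorname{{\bf n}}d\eta=0$, the latter equivalent to $d\eta=0$ after the preceding step---inside an adapted local calculation. This is the delicate point flagged in the introduction as requiring the generalized tensor maximum principle of Section~2, and it is precisely what forces the choice of absolute (rather than, e.g., Dirichlet) boundary conditions in the IBVP (\ref{boundary1}).
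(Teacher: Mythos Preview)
Your treatment of $d$-closedness is correct and essentially identical to the paper's argument. For positivity, the paper takes a slightly different tack---lifting to the unitary frame bundle $\mathcal{U}(\Omega)$ and applying Theorem~\ref{thm-max} to the \emph{scalar} function $v(\mathfrak{f})=\eta(\tfrac{1}{\sqrt{-1}}e_1\wedge\bar e_1)$ with $V$ the trivial line bundle---while you work directly on the bundle of real $(1,1)$-forms. Both routes are viable in principle and the interior verification (the null-eigenvector/$\mathcal{KB}$ computation) is the same in content.

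The genuine gap is in your boundary verification. You write that the negative eigendirection $w$ ``may have arbitrary tangential and normal components'' and that after trading normal for tangential derivatives via $d\eta=0$ you are left with ``a contribution of the second fundamental form of $\partial\Omega$ paired against a non-negative quantity.'' This is the wrong mechanism: no convexity hypothesis on $\partial\Omega$ is available, so a residual $\operatorname{II}$-term would not have a sign. The point you are missing is that $w$ \emph{cannot} have a normal component. From $\operatorname{{\bf n}}\eta=0$ one has $\eta_{i\bar m}=\eta_{m\bar i}=\eta_{m\bar m}=0$ on $\partial\Omega$ in a frame with $e_m=\tfrac{1}{2}(\nu-\sqrt{-1}J\nu)$; a first-variation (Lagrange multiplier) argument for the minimal Rayleigh quotient then forces $\langle w,e_m\rangle=0$, i.e.\ $w$ is tangential. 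Once $w$ is tangential, the condition $\operatorname{{\bf n}}(d\eta)=0$ together with $\eta_{i\bar m}=\eta_{m\bar i}=0$ gives $\nabla_\nu\eta_{w\bar w}=0$ \emph{exactly}---there is no second fundamental form term at all, and the hypothesis $D_\nu\eta\in T^{\mathcal{C}}_{v}C_{x_0}$ of Theorem~\ref{thm-max} is satisfied with equality. This is precisely the computation the paper carries out (see equations (\ref{bv1})--(\ref{eq:45})), and without it your boundary step does not close.
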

\begin{proof} Recall an elementary lemma (Lemma 7.5.3 of \cite{Morrey}) which asserts that for any $\alpha, \beta$, smooth $r$ and $r-1$ forms
\begin{equation}\label{eq:41}
\int_\Omega \langle \alpha, d\beta \rangle \, d\mu = \int_\Omega \langle \delta \alpha, \beta\rangle \, d\mu +\int_{\partial \Omega} (-1)^{r-1} \langle \operatorname{{\bf n}}\alpha, \operatorname{\bf{t}} \beta\rangle\, dA
\end{equation}
where $dA$ is the induced surface measure of $\partial M$, $\delta$ is the adjoint of exterior differentiation $d$.   Recall also that for a K\"ahler manifold $2\Delta_{\bar{\partial}}=\Delta_d=d \delta +\delta d$. We shall also write $\eta_\mu$ by $\eta$. The uniqueness of problem (\ref{boundary1}) can be seen via the monotonicity of
$
J(t)\doteqdot \int_{\Omega} |\eta|^2\, d\mu,
$
since
\begin{eqnarray*}
2J'(t)&=&-\int_{\omega} \langle \eta, \Delta \eta\rangle\, d\mu \\
&=&-\int_{\Omega} |d \eta|^2+|\delta \eta|^2\, d\mu.
\end{eqnarray*}
In the above we have used the elementary identity (\ref{eq:41}) twice.
Hence if $J(0)=0$, $J(t)=0$. Applying this observation to the difference of two solutions to (\ref{boundary1}) we have the uniqueness.

The uniqueness  implies that if $\eta(x, 0)$ is a real $(1, 1)$-form, it will be a real $(1, 1)$ form for all $t>0$.
We then proceed to prove the $d$-closeness of $\eta$. Consider
$$
I(t)\doteqdot\int_{\Omega} |d\eta|^2(x, t)\, d\mu(x).
$$
 Recalling that  $\Delta_d=d\delta+ \delta d=2\Delta_{\bar{\partial}}$,  direct calculation shows that
\begin{eqnarray*}
2I'(t)&=&- \int_{\Omega} \langle d \Delta_d \eta, d\eta\rangle(x, t)\,  d\mu(x)\\
&=& -\int_{\Omega} \langle d \, \delta\,  d \eta, d\eta\rangle(x, t)\, d\mu(x)\\
&=& -\int_{\Omega} |\delta d \eta|^2(x, t)\, d\mu(x)\le 0.
\end{eqnarray*}
Here in the first equality we use the fact that $[d, \Delta_d]=0$, in the third equation we used the boundary condition $\operatorname{{\bf n}} d\eta =0$ and identity (\ref{eq:41}), namely Lemma 7.5.3 of \cite{Morrey}. Note that the absolute boundary condition allows one to apply (\ref{eq:41}). Since $I(0)=0$, we conclude that $I(t)\equiv 0$, hence $d\eta=0$ for $t>0$.

For the preservation of the non-negativity, we apply the maximum principle Theorem \ref{thm-max} applying to the degenerate setting. First   consider the unitary frame bundle $\mathcal{U}(M)$ over $M$ which is given by the union of collections of $\mathfrak{f}=\{e_1, \cdots,  e_m\}$, where $\{e_i\}$ being a unitary frame of $T'_pM$ (in a neighborhood of $p$), over all $p\in M$. This is a principle $\mathsf{U}(m)$-bundle over $M$.   Denote its projection map by $\pi$. The Levi-Civita connection now defines a horizontal distribution $\mathcal{H}$ such that for any given $\mathfrak{f}\in \mathcal{U}(M)$, $\mathcal{H}_{\mathfrak{f}}$ is spanned by vector fields $\widetilde{X}_1, \cdots, \widetilde{X}_m$ which are {\it horizontal lift} (cf. \cite{AS}) of $e_1, \cdots, e_m$ in a neighborhood $U$ of $p$ with $\pi(\mathfrak{f})=p$. The {\it vertical vectors} can then be identified with the Lie algebra $\mathfrak{u}(m)$.  We may equip $\mathcal{U}(M)$ with a Riemannian metric so that $\pi$ is a Riemannian submersion with the metric on the fiber being the scalar product of $\mathfrak{u}(m)$ (cf. \cite{AS}). For $\eta(x, t)$, a solution to (\ref{boundary1}) we define a smooth function $v(\mathfrak{f})\doteqdot \eta(\frac{1}{\sqrt{-1}}e_1\wedge \overline{e_1})$ on $\mathsf{U}(M)\times [0, T]$. The Bochner-Kodaira lemma (cf. Lemma 2.1 of \cite{NN}) asserts that at $\mathfrak{f}$
\begin{equation}\label{eq:43}
\frac{\partial v}{\partial t}-\sum_{j=1}^m D^2_{\widetilde{X}_j \widetilde{X}_j} v=\mathcal{KB}(\eta)_{1\bar{1}}
\end{equation}
with
$$
\mathcal{KB}(\eta)_{1\bar{1}}=R_{1\bar{1}k\bar{l}}\eta_{l\bar{k}}-\frac{1}{2}\left(R_{1\bar{k}}\eta_{k \bar{1}}+R_{k \bar{1}}\eta_{1\bar{k}}\right).
$$
Here $R_{i\bar{j}k\bar{l}}$ and $R_{i\bar{j}}$ are the curvature tensor and Ricci tensor of  $(M, g)$ respectively.
 We shall apply Theorem \ref{thm-max} (with $V$ being the trivial rank one bundle) to show that the equation (\ref{eq:43}) is enough to  preserves the nonnegativity of $v$.  To apply Theorem \ref{thm-max} it suffices to check the conditions for extremal points. Now consider $\mathcal{U}(M)$ over $\Omega$ and denote it by $\mathcal{U}(\Omega)$. If for some $t>0$, $v(\mathfrak{f}_0, t)<0$ and $v(\mathfrak{f}_0, t)\le v(\mathfrak{f}, t)$ for all $\mathfrak{f}\in \mathcal{U}(\Omega)$, namely $\mathfrak{f_0}$ is a local extremal point, from the definition of $v$ (in terms of $\eta$), it implies that for some $X\in T'_p(M)$ with $|X|=1$, $\eta(\frac{1}{\sqrt{-1}}X \wedge \overline{X})<0$ and $\eta(\frac{1}{\sqrt{-1}}X \wedge \overline{X})\le \eta (\frac{1}{\sqrt{-1}}Y \wedge \overline{Y})$ for all $Y\in T'_pM$ with $|Y|=1$. Let $\omega$ be the K\"ahler form which can be written as $\sqrt{-1}\sum_{i=1}^m e^*_{i}\wedge e^*_{\bar{i}}$.  Let $-\alpha =\eta(\frac{1}{\sqrt{-1}}X \wedge \overline{X})$. Then $\widetilde{\eta}\doteqdot \eta+\alpha \omega\ge 0$ and $\widetilde{\eta}(\frac{1}{\sqrt{-1}}X \wedge \overline{X})=0$. Now the proof of Proposition 2.2 of \cite{NN} implies that $\mathcal{KB}(\widetilde{\eta})_{X\overline{X}}\ge 0$. But direct calculation shows that $\mathcal{KB}(\widetilde{\eta})=\mathcal{KB}(\eta)$. This implies  that at the extremal point $\mathfrak{f}_0$,
 $$
 \mathcal{KB}(\eta)_{X\overline{X}}\ge 0.
 $$
  This verifies the assumption (\ref{assumption1}).

  Assume that  $\mathfrak{f}_0\in \partial (\mathcal{U}(\Omega))$. Now we verify the the boundary condition. It is easy to see that $p=\pi(\mathfrak{f}_0)$ lies on the boundary of $\Omega$. By a perturbation we may assume that $v(\mathfrak{f}, 0)>0$. Choose a unitary frame $\{e_1, \cdots, e_m\}$ near $p$ so that $e_m =\frac{1}{2}\left(\nu -\sqrt{-1} J \nu\right)$, where $\nu$ is the unit normal vector with respect to the includes Riemannian metric. Let $e_{\bar{j}}=\overline{e_j}$. We can write $\eta=\sqrt{-1}\sum_{i, j=1}^m \eta_{i\bar{j}} e^*_i \wedge e^*_{\bar{j}}$. The boundary condition $\operatorname{{\bf n}}(\eta)=0$ implies that
\begin{equation}\label{bv1}
\eta_{i \bar{m}}=\eta_{m \bar{i}}=\eta_{m\bar{m}}=0
\end{equation}
 for all $1\le i \le m-1$ on $\partial \Omega$.
 Consider the functional
 $$
 \mathcal{I}(\epsilon)\doteqdot \frac{\eta(\frac{1}{\sqrt{-1}} (X+\epsilon Z)\wedge (\overline{X+\epsilon Z}))}{\omega(\frac{1}{\sqrt{-1}} (X+\epsilon Z)\wedge (\overline{X+\epsilon Z}))}.
 $$
The assumption that $v(\mathfrak{f}_0, t)$ is the smallest among all $\mathfrak{f}\in \mathcal{U}(\Omega)$ implies that $\mathcal{I}(0)\le \mathcal{I}(\epsilon)$ for any complex number $\epsilon$. The first variation
$\frac{\partial}{\partial \epsilon} \mathcal{I}(0)=\frac{\partial}{\partial \bar{\epsilon}} \mathcal{I}(0)=0$ then implies that
$$
\eta(\frac{1}{\sqrt{-1}}X \wedge \overline{Z})-\eta(\frac{1}{\sqrt{-1}}X \wedge \overline{X})\omega( \frac{1}{\sqrt{-1}}X \wedge \overline{Z})=0
$$
for any $Z$. By letting $Z=e_m$, equation (\ref{bv1}) and the assumed condition that $\eta( \frac{1}{\sqrt{-1}}X \wedge \overline{X})<0$,  imply that $\omega(\frac{1}{\sqrt{-1}}X \wedge \overline{e_m})=0$. Observing that $\{e_k\}$ is a unitary frame, this shows that $X$ is spanned by $\{e_1, \cdots, e_{m-1}\}$. Without the loss of the generality we assume that $X=e_1$. Since
$$
d\eta =\sqrt{-1}\sum_{i, j, k=1}^m \left(\nabla_{e_k} \eta_{i\bar{j}} e^*_k \wedge e^*_{i}\wedge e^*_{\bar{j}}+\nabla_{e_{\bar{k}}}\eta_{i\bar{j}}e^*_{\bar{k}}\wedge  e^*_{i}\wedge e^*_{\bar{j}}\right).
$$
the assumption $\operatorname{{\bf n}}(d\eta)=0$ on $\partial \Omega$ and  equations (\ref{bv1}), imply that
\begin{equation}\label{eq:45}
\nabla_{\nu} \eta_{i\bar{j}}=0
\end{equation}
for any $i, j$ with $1\le i, j\le m-1$. Here we observe that for $1\le i\le m-1$, $e_i$ are all tangential to $\partial \Omega$. Now let $\widetilde{\nu}$ be the horizontal lift of $\nu$. Clearly $\widetilde{\nu}$ is the unit exterior normal to $\partial \mathcal{U}(\Omega)$. To apply Theorem \ref{thm-max}, we only need to verify that at $\mathfrak{f}_0$, $\frac{\partial v}{\partial \widetilde{\nu}}\ge 0$. Let $\gamma(s)$ (with $s\ge 0$) be a geodesic in the direction of $-\nu$ starting at $p$,  and let $\{e_1(s), \cdots, e_m(s)\}$ be a frame which is parallel along $\gamma(s)$. Let $\widetilde{\gamma}(s)$ be the lifting curve starting from $\mathfrak{f}_0$. Then $-\widetilde{\nu}=\widetilde{\gamma}'(0)$ and
\begin{eqnarray*}
-\frac{\partial v}{\partial \widetilde{\nu}}|_{\mathfrak{f}_0}&=&\frac{d}{ds} \left. \eta\left(\frac{1}{\sqrt{-1}}e_1(s)\wedge e_{\bar{1}}(s)\right)\right|_{s=0}\\
&=& \nabla_\nu \eta _{1 \bar{1}}\\
&=&0
\end{eqnarray*}
by equation (\ref{eq:45}).  Theorem \ref{thm-max} can be applied to have that $v\ge 0$ on $\mathcal{U}(\Omega)\times [0, T]$, hence the nonnegativity of $\eta$. This completes the proof of the proposition.
\end{proof}

\begin{remark} Similar argument proves that the unique solution to problem  (\ref{boundary1}) also preserves  both the $d$-closeness and the positivity for $(p, p)$-forms if the condition $\mathcal{C}_p$ in \cite{NN} is assumed for manifold $(M, g)$.

\end{remark}

 Equipped with the above proposition we are ready to prove Theorem \ref{main1}. First we choose a sequence of smooth domains $\Omega_\mu$ which exhausts $M$. It is not hard to see that the problem (\ref{boundary1}) has a long time solution on $\Omega_\mu \times [0, +\infty)$. Besides appealing to the results from \cite{Lady1}, one way to convince the existence is the monotonicity (of non-increasing) of the energy
$$
\mathcal{J}(t)\doteqdot \int_{\Omega} |\delta \eta_\mu|^2\, d\mu.
$$
 This can be proved similarly as the above by observing that $\operatorname{{\bf n}}(\delta \eta_\mu)=0$, which  follows from $\operatorname{{\bf n}}\eta_\mu =0$ by Lemma 7.5.2 of \cite{Morrey}, and applying (\ref{eq:41}),  namely Lemma 7.5.3 of \cite{Morrey}. Let $\eta_\mu$ be the solution on $\Omega_\mu \times [0, \infty)$ to the boundary value problem (\ref{boundary1}). By Proposition \ref{prop41}, we have that $\eta_\mu(x, t)$ is positive and it is $d$-closed. Let $u_\mu(x, t)=\Lambda \eta_\mu(x, t)$. By the positivity of $\eta_\mu$ the estimate of $|\eta_\mu|$ can be reduced to the upper estimate of $u_\mu$. By the identities
 $$
 \partial \Lambda -\Lambda \partial=-\sqrt{-1}\bar{\partial}^*,\quad \quad \bar{\partial} \Lambda -\Lambda \bar{\partial}=\sqrt{-1}\partial^*
 $$
 where $\partial^*$ and $\bar{\partial}^*$ are conjugate operators of $\partial$ and $\bar{\partial}$, and (8.1.19) from \cite{Morrey} which asserts that
 $$
 \iota_\nu \bar{\partial}^* \eta_\mu=0
 $$
 the closeness of $\eta$ implies that
 $$
 \iota_\nu \partial u_\mu =\iota_\nu \bar{\partial} u_\mu =0.
 $$
 This in particular implies that $u_\mu$ satisfies the Neumann boundary condition. Let $H_\mu(x, y, t)$ be the Neumann fundamental solution on $\Omega_\mu$. By the local gradient estimate of Li-Yau we know that, by passing to a subsequence if necessary,
 $H_\mu(x, y, t)$ converges, uniformly on any given compact subset, to a positive solution starting with the $\delta_x(y)$. The uniqueness of the positive solution (see for example Theorem 5.1 of \cite{LY}) implies that the limit is the minimum positive fundamental solution, namely the heat kernel $H(x, y, t)$ on $M$. The assumption on the scalar curvature in Theorem \ref{main1} implies that
 \begin{equation}\label{eq:int1}
 \int_M |\operatorname{Ric}|(y)\exp( -a r^{2-\delta}(y))\, d\mu(y) < \infty
 \end{equation}
 for some $a, \delta>0$. Hence by the heat kernel estimate of Li-Yau:
 \begin{equation}\label{hk-ly}
 \frac{C_1(n)}{V_x(\sqrt{t})} \exp\left(-\frac{r^2(x, y)}{3t}\right)\le H(x, y, t)\le \frac{C_2(n)}{V_x(\sqrt{t})}\exp\left(-\frac{r^2(x, y)}{5t}\right)
 \end{equation}
 with $C_1(n), C_2(n)>0$, out of (\ref{eq:int1}) we conclude the finiteness of
 $$
 u(x, t)\doteqdot\int_M H(x, y, t)\mathcal{S}(y)\, d\mu(y)
 $$
 for any $t>0$.
The local gradient estimate of \cite{LY} also gives  similar upper estimates on $H_\mu(x, y, t)$, hence $u_\mu(x, t)$ as the above ones for $H(x, y, t)$ and $u(x, t)$. Another way to obtain the maximum modulus estimate on $u_\mu$ is to make use of  $k(o, r)\doteqdot \aint_{B_o(r)}\mathcal{S}(y)\, d\mu(y)$ is uniformly (in terms of $r$) bounded from above, which is implied by the assumption (\ref{2-decay}) made in  the theorem. First we can choose $r_\mu, \Omega_\mu$ so that $B_o(r_\mu)\subset \Omega_\mu \subset B_o(2r_\mu)$. The Neumann boundary condition ensures that $\int_{\Omega_\mu} u_\mu(x, t) d\mu(x)$ is a conserved quantity (in $t$). Hence it is
bounded from the above $\int_{B_o(2r_\mu)}\mathcal{S}(y)\, d\mu(y)$. The volume doubling property of the manifold  then implies that there exists an absolute  constant $C$ such that
$$
\aint_{B_o(r_\mu)} u_\mu(y, t)\, d\mu(y)\le C k(o, 2r_\mu).
$$
Then the interior maximum estimate on $u_\mu(x, t)$ now follows from the parabolic mean value inequality (cf. Theorem 1.1 of \cite{LT}).
Now the interior Schauder's estimates (cf. Theorem 6.2.6 of \cite{Morrey}, or \cite{Simon}) can be applied to extract convergent subsequence and obtain a solution $\eta(x, t)$ solving
$$
\left(\frac{\partial}{\partial t}+\Delta_{\bar{\partial}}\right) \eta (x, t)=0
$$
on $M \times [0, \infty)$. It is easy to see that $\eta(x, t)\ge 0$ and it is $d$-closed. Moreover, the positivity of $\eta(x, t)$ implies that $u(x,t)=\Lambda \eta(x, t)$, by the uniqueness. Clearly it bounds $|\eta|$ after multiplying some positive constant depending on $m$. Note that $u(x, t)$ also satisfies the Harnack estimate (cf. \cite{LY})
\begin{equation}\label{eq:add1}
u(x, t)\le u(o, T) \left(\frac{T}{t}\right)^{n/2} \exp\left(\frac{r^2(o, x)}{4(T-t)}\right)
\end{equation}
for any $t<T$. Here $o\in M$ is a fixed point.
Estimate (\ref{eq:add1})  allows one to use  Corollary 2.1 of \cite{Ni-JAMS} via a perturbation argument.  Alternatively  we  may apply  Theorem 4.1 of \cite{NN}, to conclude that
\begin{equation}\label{eq:lyh}
\frac{1}{t}\frac{\partial}{\partial t} \left(t u(x, t)\right)+\langle \partial u, X\rangle +\langle \overline{\partial} u, \overline{X}\rangle +\eta\left(\frac{1}{\sqrt{-1}}X \wedge \overline{X}\right)\ge 0
\end{equation}
for any $(1, 0)$-type vector field $X$. From Theorem 4.1 of \cite{NN} to the above estimate one needs to use that $\eta$ is closed. See Corollary 4.2 of \cite{NN} for details on this.

By taking $X=0$ in (\ref{eq:lyh}), we have that $tu(x, t)$ is monotone non-decreasing. Since $u(x, t)$ is a solution to the heat equation with $u(x, 0)=\mathcal{S}(x)$. Now we can evoke the `moment estimates' in \cite{N02}.
\begin{lemma}[Theorem 3.1, \cite{N02}] Let $u(x, t)$ be the unique nonnegative solution to the heat equation on $M$, with nonnegative Ricci curvature. Assume that $u(x, 0)=f(x)$. If
$\frac{1}{V_x(r)}\int_{B_x(r)} f(y)\, d\mu(y)\le Ar^{-d}$ for some $A>0$, $d\ge -n-2$,  and all $r\ge R$, then $u(x, t)\le C(n, d)At^{d/2}$ for $t\ge R^2$.
\end{lemma}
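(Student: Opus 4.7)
The plan is to combine the heat kernel representation with the Li--Yau Gaussian upper bound and a dyadic annular decomposition around $x$. Under $\mathrm{Ric}\ge 0$, the uniqueness theorem for nonnegative solutions (Theorem 5.1 of \cite{LY}) gives
\begin{equation*}
u(x, t) = \int_M H(x, y, t) f(y) \, d\mu(y),
\end{equation*}
and then \eqref{hk-ly} controls $H(x, y, t)$ by $C(n)\, V_x(\sqrt{t})^{-1} \exp(-r^2(x, y)/(5t))$, reducing the lemma to a weighted integral estimate for $f$.

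Next I would split $M$ into $A_0 := B_x(\sqrt{t})$ and the dyadic shells $A_k := B_x(2^k \sqrt{t}) \setminus B_x(2^{k-1}\sqrt{t})$ for $k \ge 1$. Because $t \ge R^2$, every radius $2^k\sqrt{t}$ exceeds $R$, so the averaged hypothesis applies at every scale:
\begin{equation*}
\int_{B_x(2^k \sqrt{t})} f \, d\mu \le A\, (2^k \sqrt{t})^{-d}\, V_x(2^k \sqrt{t}).
\end{equation*}
Bishop--Gromov volume comparison under nonnegative Ricci gives $V_x(2^k \sqrt{t}) \le 2^{k n} V_x(\sqrt{t})$, and on each $A_k$ with $k \ge 1$ the Gaussian factor contributes $e^{-c\, 4^{k-1}}$. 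Combining these three ingredients, the contribution of $A_k$ to $u(x, t)$ is at most
\begin{equation*}
C(n)\, A\, t^{-d/2}\, 2^{k(n-d)}\, e^{-c\, 4^{k-1}},
\end{equation*}
and the doubly exponential Gaussian decay defeats the polynomial factor $2^{k(n-d)}$ for every finite $n - d$, so the sum in $k$ converges to a constant $C(n, d)\, A\, t^{-d/2}$ of the stated form.

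The main technical point is the treatment of the innermost piece $A_0$, where no Gaussian gain is available. There one must use the averaged hypothesis \emph{directly} at scale $\sqrt{t}$ and rely entirely on the $V_x(\sqrt{t})^{-1}$ prefactor from \eqref{hk-ly} so that the two volume factors cancel and yield exactly the correct power of $t$. The hypothesis $d \ge -n - 2$ plays no role in the convergence of the annular sum itself but serves to keep $n - d$ in a dimension-controlled range, so that the constant $C(n, d)$ is genuinely dimension- and exponent-dependent and nothing more; more negative $d$ would only force a larger polynomial weight that the Gaussian still absorbs.
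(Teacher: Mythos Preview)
The paper does not supply its own proof of this lemma; it is quoted verbatim as Theorem~3.1 of \cite{N02} and then invoked as a black box, so there is nothing in the present paper to compare your argument against.

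That said, your approach---heat-kernel representation via the uniqueness of nonnegative solutions under $\mathrm{Ric}\ge 0$, the Li--Yau Gaussian upper bound~\eqref{hk-ly}, a dyadic annular decomposition centered at $x$, and Bishop--Gromov volume doubling to control $V_x(2^k\sqrt t)/V_x(\sqrt t)$---is the standard and correct way to prove such moment-type estimates, and is presumably what \cite{N02} does as well.

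One remark worth recording: your computation produces $u(x,t)\le C(n,d)\,A\,t^{-d/2}$, whereas the lemma as printed has $t^{d/2}$. If you trace the paper's own application---hypothesis~\eqref{2-decay} gives averaged scalar curvature $o(r^{-2})$ and the paper concludes $u(o,t)=o(t^{-1})$---you see that the exponent you obtained is the one that is actually consistent with how the lemma is used. The discrepancy is a sign typo in the printed statement (either $r^{-d}$ should be $r^{d}$, or $t^{d/2}$ should be $t^{-d/2}$), not an error in your argument. Your comment that the constraint $d\ge -n-2$ plays no role in the convergence of the dyadic sum is also correct: the Gaussian factor $e^{-c\,4^{k}}$ dominates $2^{k(n-d)}$ for every real $d$.
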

 Applying the above result to $d=-2$ we have that  $u(o, t)=o(t^{-1})$, from the assumption (\ref{2-decay}) and that $(M,g)$ has nonnegative Ricci curvature. Together with that $tu(o, t)$ is monotone nondecreasing, it implies that
$u(o, t)=0$ for all $t$. The strong maximum principle implies that $u(y, t)\equiv 0$ noting that $u(y, t)\ge 0$ and it solves the heat equation.  Hence $u(y, 0)=S(y)\equiv 0$. This proves the flatness of $(M, g)$ since $(M, g)$ is assumed to have nonnegative bisectional curvature.

\begin{remark} The monotonicity $tu(o, t)$ is the same as the monotonicity of 
$$
t\int_M H(o, y, t)\mathcal{S}(y)\, d\mu(y).
$$
One can  shows that, if denote by $\mathcal{S}_k$ the $k$-th elementary symmetric  function of $\eta(y, 0)$, which is viewed as a Hermitian symmetric bilinear form,
\begin{equation}\label{sigma-k}
t^k\int_M H(o, y, t) \mathcal{S}_k(y)\, d\mu(y)
\end{equation}
is monotone non-decreasing.
\end{remark}

Since we do not make uses of any special features of Ricci form, the result holds for any $d$-closed real positive $(1, 1)$ form $\rho$. More precisely we have proved the following result.

\begin{theorem} Assume that $(M, g)$ is a complete K\"ahler manifold with nonnegative bisectional curvature. Suppose that $\rho=\sqrt{-1}\sum \rho_{i\bar{j}} dz^i \wedge dz^{\bar{j}}$, a real $d$-closed positive $(1,1)$-form. Then $\rho\equiv 0$, if (\ref{2-decay}) holds for some $o\in M$ with $\mathcal{S}(y)=\Lambda \rho (y)$.
\end{theorem}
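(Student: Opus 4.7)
The plan is to repeat the argument of Theorem \ref{main1} almost verbatim, after checking that no specifically Ricci-theoretic property of the initial datum is ever used. In the proof of Theorem \ref{main1} the Ricci form $\operatorname{Ric}$ enters only as (i) the initial datum of the Hodge--Laplace heat problem (\ref{boundary1}) and (ii) through its trace $\mathcal{S}=\Lambda\operatorname{Ric}$ appearing in the hypothesis (\ref{2-decay}). Both roles are abstracted at once by replacing $\operatorname{Ric}$ by the given $\rho$ and setting $\mathcal{S}(y):=\Lambda\rho(y)$.

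First I would solve (\ref{boundary1}) on a smooth exhaustion $\{\Omega_\mu\}$ of $M$ with initial datum $\rho$ in place of $\operatorname{Ric}$. The last sentence of Proposition \ref{prop41} is precisely the assertion that preservation of $d$-closeness and of positivity remains valid for any $d$-closed positive real $(1,1)$-form initial condition, so I obtain approximate solutions $\eta_\mu(x,t)\ge 0$ which are $d$-closed, satisfy the absolute boundary condition, and have $\eta_\mu(\cdot,0)=\rho$. Next I would extract a subsequential limit $\eta(x,t)\ge 0$ on $M\times[0,\infty)$ by the same procedure as in the proof of Theorem \ref{main1}: the trace $u_\mu:=\Lambda\eta_\mu$ still satisfies the Neumann boundary condition $\iota_\nu\partial u_\mu=\iota_\nu\bar\partial u_\mu=0$, since its derivation only used $d$-closeness, the commutators $[\partial,\Lambda]$ and $[\bar\partial,\Lambda]$, and (8.1.19) of \cite{Morrey}; the hypothesis (\ref{2-decay}) gives a uniform bound on $k(o,r)=\aint_{B_o(r)}\mathcal{S}\,d\mu$; conservation of $\int_{\Omega_\mu}u_\mu\,d\mu$, volume doubling, and the parabolic mean value inequality then give uniform $L^\infty$ control on $u_\mu$ on compact sets; and interior Schauder estimates yield a $d$-closed positive limit $\eta(x,t)$ solving the Hodge--Laplace heat equation with $\eta(\cdot,0)=\rho$.

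With $\eta$ in hand I would set $u(x,t):=\Lambda\eta(x,t)$, a nonnegative solution of the scalar heat equation with $u(\cdot,0)=\mathcal{S}$. By Li--Yau's uniqueness for nonnegative solutions (Theorem 5.1 of \cite{LY}), $u(x,t)=\int_M H(x,y,t)\mathcal{S}(y)\,d\mu(y)$, finite thanks to the Gaussian upper bound (\ref{hk-ly}) together with (\ref{2-decay}). Applying Corollary 4.2 of \cite{NN} --- the sharp differential Harnack estimate for $d$-closed positive $(1,1)$-forms under the Hodge--Laplace heat flow --- to $\eta$, and taking $X=0$ in (\ref{eq:lyh}), I get that $tu(x,t)$ is monotone nondecreasing in $t$ at every $x$. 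The moment estimate of \cite{N02} combined with (\ref{2-decay}) forces $u(o,t)=o(t^{-1})$, so $tu(o,t)\equiv 0$; the strong maximum principle propagates this to $u\equiv 0$ on $M\times(0,\infty)$, and by continuity $\mathcal{S}=\Lambda\rho\equiv 0$. Since the Hermitian matrix $(\rho_{i\bar j})$ is pointwise positive semidefinite, vanishing of its K\"ahler trace forces $\rho_{i\bar j}\equiv 0$.

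The only substantive obstacle is the one that already drove the proof of Theorem \ref{main1}: one must secure \emph{simultaneously} $d$-closeness and positivity of the deformed form $\eta(x,t)$. Without positivity the heat-kernel representation of $u$ and the Li--Yau moment estimate are unavailable; without $d$-closeness the sharp Harnack of \cite{NN} does not apply. Both are delivered by Proposition \ref{prop41}, whose proof relied on the absolute boundary condition in (\ref{boundary1}) and on the general maximum principle Theorem \ref{thm-max} --- and nothing in either of those steps depended on the initial form being Ricci. Once those are in place, the remaining arguments (Schauder extraction, monotonicity, moment estimate, strong maximum principle, and the pointwise step deducing $\rho\equiv 0$ from $\Lambda\rho\equiv 0$ via positive semidefiniteness) are entirely formal.
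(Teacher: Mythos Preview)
Your proposal is correct and is exactly the approach the paper takes: the paper simply remarks that ``we do not make uses of any special features of Ricci form'' and restates Theorem \ref{main1} for an arbitrary $d$-closed positive real $(1,1)$-form, and your write-up is a careful verification of precisely that claim. The only new wrinkle---replacing the concluding step ``$\mathcal{S}\equiv 0$ plus nonnegative bisectional curvature implies flatness'' by ``$\Lambda\rho\equiv 0$ plus $\rho\ge 0$ implies $\rho\equiv 0$''---is handled correctly by your observation that a positive semidefinite Hermitian form with vanishing trace is zero.
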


\section{A relative monotonicity}
In \cite{NT-jdg}, for Theorem \ref{nt}, in stead of (\ref{2-decay}), the following seemingly weaker condition
\begin{equation}\label{3-decay}
\int_0^r s\aint_{B_{o}(s)}\mathcal{S}(y)\, d\mu(y)\, ds =o(\log r)
\end{equation}
is assumed. It is not hard to see that (\ref{2-decay}) implies (\ref{3-decay}). In this section we shall show that these two conditions are equivalent as a result of a general monotonicity estimate. It is not hard to see that (\ref{3-decay}) implies that $\aint_{B_{o}(r)}\mathcal{S}(y)\, d\mu(y)=o\left(\frac{\log r}{r^2}\right)$. But it seems not an easy task to improve it to $o\left(\frac{1}{r^2}\right)$ without extra considerations. The key ingredient in proving that (\ref{3-decay}) implies (\ref{2-decay}) is the following relative monotonicity.

\begin{theorem}\label{thm31} Assume that $(M^m, g)$ is a complete K\"ahler manifold with nonnegative bisectional curvature. Let $\rho$ be a $d$-closed real positive $(1,1)$ form. Then there exists $\delta(m)=\sqrt{\frac{1}{2m}}$ and $C=C(m)>0$ such that
\begin{equation}\label{mono31}
r_1^2\aint_{B_o(r_1)} \mathcal{S}(y)\, d\mu(y)\le C r^2 \aint_{B_o(r)}\mathcal{S}(y)\, d\mu(y)
\end{equation}
for any positive $r_1\le \delta r$. Here $\mathcal{S}=\Lambda \rho$.
\end{theorem}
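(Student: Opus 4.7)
The plan is to derive Theorem \ref{thm31} from the monotonicity of $\Psi(t) := t\int_M H(o,y,t)\mathcal{S}(y)\,d\mu(y)$, which is established in Section 3 as a consequence of the Li-Yau-Hamilton differential estimate applied to the Hodge heat equation deformation $\eta(\cdot,t)$ of $\rho$ (which remains $d$-closed and positive for all $t\ge 0$). The auxiliary ingredients are the Li-Yau Gaussian two-sided bounds on the heat kernel $H$ and the Bishop-Gromov volume comparison, both valid on a complete manifold of nonnegative Ricci curvature.

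First I would set $t_1 := r_1^2$ and $t_2 := r^2/(2m)$, so that the hypothesis $r_1 \le \delta r = r/\sqrt{2m}$ is equivalent to $t_1 \le t_2$, and hence $\Psi(t_1) \le \Psi(t_2)$ by monotonicity. For the lower bound, the Li-Yau lower heat kernel estimate gives $H(o,y,t_1) \ge c_1(m)V_o(r_1)^{-1}$ for $y\in B_o(r_1)$; restricting the integral defining $u(o,t_1)$ to this ball and using $\mathcal{S}\ge 0$ yields
\[
\Psi(t_1) \ge c_1(m)\, r_1^2 \aint_{B_o(r_1)}\mathcal{S}(y)\,d\mu(y).
\]
The main technical step is the matching upper bound $\Psi(t_2) \le C(m)\, r^2 \aint_{B_o(r)}\mathcal{S}(y)\,d\mu(y)$. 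I would apply the Gaussian upper bound $H(o,y,t_2) \le C_2(m)V_o(\sqrt{t_2})^{-1}\exp(-r^2(o,y)/(5t_2))$ and decompose $M = B_o(r) \cup \bigcup_{k\ge 1} A_k$ with $A_k := B_o(2^k r)\setminus B_o(2^{k-1}r)$. The precise choice $t_2 = r^2/(2m)$ makes the Gaussian factor on $A_k$ decay like $\exp(-(2m)\cdot 4^{k-1}/5)$, and this interplay with the polynomial volume factor $V_o(2^k r)/V_o(\sqrt{t_2}) \le (2^k\sqrt{2m})^{2m}$ from Bishop-Gromov is exactly what forces the value $\delta = 1/\sqrt{2m}$.

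The delicate point, which I expect to be the main obstacle, is that the naive dyadic bound produces
\[
\Psi(t_2) \le C(m)\sum_{k\ge 0}\lambda_k(m)\, (2^k r)^2 \aint_{B_o(2^k r)}\mathcal{S},\qquad \lambda_k = 2^{k(2m-2)}\exp\!\Bigl(-(2m)\frac{4^{k-1}}{5}\Bigr),
\]
which involves averages of $\mathcal{S}$ over larger balls $B_o(2^k r)$ that are not a priori controlled by $\aint_{B_o(r)}\mathcal{S}$. The idea for closing this gap is to bootstrap the dyadic inequality by re-applying the monotonicity of $\Psi$ at the scales $t^{(k)}_2 := (2^k r)^2/(2m)$: because $\lambda_k$ decays super-exponentially in $k$, the iterated estimate converges absolutely and collapses to a universal constant $C(m)$ depending only on the dimension. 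Combining the lower bound, the upper bound, and the monotonicity inequality $\Psi(t_1) \le \Psi(t_2)$ then yields $r_1^2 \aint_{B_o(r_1)}\mathcal{S} \le C(m)\, r^2 \aint_{B_o(r)}\mathcal{S}$ for all $r_1 \le \delta r$, which is the asserted relative monotonicity.
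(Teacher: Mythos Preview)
Your approach has a genuine gap at precisely the point you flag as delicate. The proposed bootstrap does not close: each application of the dyadic upper estimate replaces the average over $B_o(r)$ by a weighted sum of averages over the larger balls $B_o(2^k r)$, so iterating only pushes the scales outward, never back down to $r$. The monotonicity of $\Psi$ gives no way to bound $(2^k r)^2\aint_{B_o(2^k r)}\mathcal{S}$ by $r^2\aint_{B_o(r)}\mathcal{S}$---that inequality is exactly what you are trying to prove. The super-exponential decay of the $\lambda_k$ makes your series converge, but its sum is a weighted combination of averages over all scales $\ge r$, not a dimensional constant times the average at scale $r$. There is also a more basic obstruction: Theorem~\ref{thm31} carries no growth hypothesis on $\rho$, so $\Psi(t)=t\int_M H(o,y,t)\mathcal{S}(y)\,d\mu(y)$ need not even be finite, and the monotonicity of $tu(o,t)$ from Section~3 (whose derivation uses the decay assumption \eqref{2-decay} to construct the global Hodge heat deformation) is not available here.

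The paper bypasses both issues by introducing a compactly supported cutoff $\varphi_{(o,t_1),h}(y,t)=\bigl(1-h^{-2}(r^2(o,y)+(m-1)(t-t_1))\bigr)_+$ and proving a \emph{new}, localized monotonicity for
\[
\mathcal{E}(t)=(t_0-t)\int_M \varphi_{(o,t_1),h}(y,t)\,H(o,y,t_0-t)\,\mathcal{S}(y)\,d\mu(y).
\]
This monotonicity is obtained not from the LYH estimate of Section~3 but from the Cao--Ni matrix Harnack $\nabla_i\nabla_{\bar j}\log H+\tau^{-1}g_{i\bar j}\ge 0$ for the heat kernel, the induced complex Hessian comparison $\nabla_i\nabla_{\bar j}r^2\le g_{i\bar j}$, and the $d$-closedness of $\rho$; one checks directly that $\mathcal{S}(\partial_t-\Delta)\varphi+\rho_{i\bar j}\varphi_{j\bar i}\le 0$, which is where the specific form of $\varphi$ enters. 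Because $\varphi$ is supported in $B_o(h)$, the upper bound on $\mathcal{E}$ involves only $\int_{B_o(h)}\mathcal{S}$---no dyadic tails appear---and the Li--Yau two-sided heat kernel bounds then yield \eqref{mono31} directly.
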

\begin{proof}  Given any $t_1\in \R$, let
$$
\varphi_{(o, t_1), h}(y, t)=\left(1-\frac{r^2(o, y)+(m-1)(t-t_1)}{h^2}\right)_+
$$
where $r(x, y)$ is the distance function between $x$ and $y$, $h>0$ is any given positive number, $(f)_+$ denotes the positive part of any given continuous function $f$.
Consider the quantity
$$
\mathcal{E}_{t_0, t_1, o, h}(t)=(t_0-t)\int_M \varphi_{(o, t_1), h}H(o, y, t_0-t) \mathcal{S}(y)\, d\mu(y).
$$
When the meaning is clear we simply denote it as $\mathcal{E}(t)$. The key step for the proof is to establish that $\mathcal{E}(t)$ is monotone non-increasing. We  perform the calculation inside the support of $\varphi$, pretending that $\varphi$ is smooth and address how to resolve this issue later. Denote $t_0-t$ by $\tau$. Also abbreviate $\varphi_{(o, t_1), h}$ by $\varphi$.

Direct calculation shows
\begin{eqnarray}
\frac{d}{dt} \mathcal{E}(t)&=& \int_M -\varphi H \mathcal{S} -\tau (\Delta H) \mathcal{S} \varphi +\tau H S \varphi_t  \nonumber\\
&=& \int_M \tau\left(\heat \varphi\right) H \mathcal{S} -H\mathcal{S}\varphi +\tau \langle \nabla' H, \nabla' \mathcal{S}\rangle \varphi -\tau \langle \nabla'  \mathcal{S}, \nabla' \varphi \rangle H. \label{est-41}
\end{eqnarray}
Here $\nabla'f=\nabla_i f\frac{\partial}{\partial z^i}$ (and $\nabla'' f= \overline{\nabla ' f}$).
Using the basic estimate of \cite{CN}:
\begin{equation}\label{hk-est1}
\nabla_i\nabla_{\bar{j}} \log H +\frac{1}{\tau} g_{i\bar{j}} \ge 0
\end{equation}
which implies the complex Hessian estimate for $r^2(o, y)$ (as a function of $y$)
\begin{equation}\label{hessian-com}
\nabla_i \nabla_{\bar{j}} r^2 \le g_{i\bar{j}},
\end{equation}
we have that, under a normal coordinate,
\begin{eqnarray}
0&\le& \int_M \left( \nabla_i\nabla_{\bar{j}} \log H +\frac{1}{\tau} g_{i\bar{j}} \right) H \rho_{i\bar{j}}\, \varphi \nonumber\\
&=& \int_M \frac{1}{\tau} H \mathcal{S} \varphi - \langle \nabla' H, \nabla' \mathcal{S}\rangle \varphi -\rho(\nabla' H, \nabla'' \varphi) -\rho(\nabla' \log H, \nabla'' \log H)H \, \varphi. \label{est-42}
\end{eqnarray}
Here  $\rho(X, \overline{Y})=\rho_{i\bar{j}}X^i Y^{\bar{j}}$ for $X=X^i\frac{\partial}{\partial z^i}, \overline{Y}=Y^{\bar{j}}\frac{\partial}{\partial z^{\bar{j}}}$, and we have used that $\rho$ is $d$-closed, which implies that $\nabla_k \rho_{i\bar{j}}=\nabla_i \rho_{k \bar{j}}$. Integration by parts on the third term of the right hand side  shows that
\begin{equation}\label{equ-41}
-\int_M \rho(\nabla' H, \nabla'' \varphi)=\int_M H \langle \nabla' \varphi, \nabla' \mathcal{S}\rangle +H \rho_{i\bar{j}} \varphi_{j \bar{i}}.
\end{equation}
Combining (\ref{est-42}) and (\ref{equ-41}), also noting that $\rho\ge 0$, we have that
\begin{equation}\label{est-43}
0\le \int_M \frac{1}{\tau} H \mathcal{S} \varphi- \langle \nabla' H, \nabla' \mathcal{S}\rangle \varphi+\langle \nabla' \varphi, \nabla' \mathcal{S}\rangle +H \rho_{i\bar{j}} \varphi_{j \bar{i}}.
\end{equation}
Applying this estimate to (\ref{est-41}), it implies that
\begin{equation}\label{est-44}
\frac{d}{dt}\mathcal{E}(t) \le \int_M \tau\left(\heat \varphi\right) H \mathcal{S} +\tau H \rho_{i\bar{j}} \varphi_{j \bar{i}}.
\end{equation}

Meanwhile, under a normal coordinate which diagonalizes  $\rho_{i\bar{j}}$,
\begin{eqnarray*}
\mathcal{S}\heat \varphi +\rho_{i\bar{j}}\varphi_{j \bar{i}}&=& \frac{1}{h^2}\left(-(m-1)\mathcal{S}+ \nabla_{i}\nabla_{\bar{i}}r^2 \mathcal{S}-\nabla_i\nabla_{\bar{j}} r^2 \rho_{j\bar{i}}\right)\\
&=& \frac{1}{h^2}\left(-(m-1)\mathcal{S}+\nabla_{i}\nabla_{\bar{i}}r^2 \left( \mathcal{S}-\rho_{i\bar{i}}\right)\right)\\
&\le& \frac{1}{h^2}\left(-(m-1)\mathcal{S} +\sum_{i=1}^m ( \mathcal{S}-\rho_{i\bar{i}})\right)\\
&=& 0.
\end{eqnarray*}
In the above inequality, we have used (\ref{hessian-com}) and $\mathcal{S}-\rho_{i\bar{i}}\ge 0$.
This estimate together with (\ref{est-44}) implies that $\frac{d}{dt}\mathcal{E}(t)\le 0$. The non-smoothness of $\varphi$ does not affect the monotonicity since the estimate $\mathcal{S}\heat \varphi +\rho_{i\bar{j}}\varphi_{j \bar{i}}\le 0$ holds in the distribution sense. One can follow the regularization process in Section 5 of  \cite{N-jdg}  (see also  \cite{Ecker} ) by multiplying a cut-off function $\eta_\epsilon$ which is given by $\zeta_\epsilon (\varphi)$ with $\zeta_\epsilon$ being a smooth cut-off function on $\R^1$ satisfying $0\le \zeta_\epsilon(z) \le 1$, $\zeta_\epsilon (z)=1$ for $z\ge -n \log(1-\epsilon)$ and $\zeta_\epsilon(z)=0$ for $z\le 0$. Furthermore $\zeta_\epsilon (z)$ is chosen to satisfies that  $|\zeta_\epsilon'|\le \frac{2}{-n\log(1-\epsilon)}$, $|\zeta_\epsilon'(z) z|\le 2$ for $z\in [0, -n\log (1-\epsilon)]$ and $\zeta'(z)=0$ for other $z$. This ensures that the error terms
$$
\int_M S \varphi H \frac{\partial \eta_\epsilon }{\partial t }, \quad \quad \int_M \varphi |\nabla (H S)||\nabla \eta_\epsilon|
$$
all tend to zero as $\epsilon \to 0$.

Now fixing  $T>0$,  $h>0$ and $h_1\in (0, \delta h)$, where $\delta$ is a positive constant depending only on the dimension $m$ which shall be specified later,  let $t_0=T+h_1^2$, $t_1=T-\delta^2 h^2$. Then we have
$$
\mathcal{E}(T-\delta^2 h^2)\ge \mathcal{E}(T-h_1^2)
$$
by the monotonicity on $\mathcal{E}(t)$. By Li-Yau's heat kernel upper bound (\ref{hk-ly}) we have that
$$
H(o, y, T-\delta^2 h^2)\le \frac{C(m)}{V_o(h)}
$$
for some positive constant $C=C(m).$ Note that $\varphi(t) \le 1$, easy estimates shows that
\begin{equation}\label{est-45}
\mathcal{E}(T-\delta^2 h^2) \le \frac{C(m)\delta^2 h^2}{V_o(h)}\int_{B_o( h)} \mathcal{S}(y)\, d\mu(y).
\end{equation}
On the other hand, for $y\in B_o(h_1)$, we have that
\begin{eqnarray}
\varphi(y, T-h_1^2)&\ge& 1-\frac{h_1^2+(m-1)\delta^2 h^2}{h^2}\nonumber\\
&\ge& 1-m \delta^2\ge 1/2 \label{est-46}
\end{eqnarray}
if we choose $\delta =\sqrt{\frac{1}{2m}}$. On the other hand, the lower estimate on the heat kernel from (\ref{hk-ly}) implies that for $y\in B_0( h_1)$, there exists $C'(m)\ge 0$ such that when $t=T-h_1^2$,
\begin{equation}\label{est-47}
H(o, y, t_0-t)\ge \frac{C'(m)}{V_o(\sqrt{2}h_1)}\ge\frac{C'(m)}{2^{m} V_o(h_1)}.
\end{equation}
Combining (\ref{est-46}) and (\ref{est-47}) we have the following lower estimate:
\begin{equation}\label{est-48}
\mathcal{E}(T-h_1^2)\ge \frac{C''(m) h_1^2}{V_o(h_1)}\int_{B_o(h_1)}\mathcal{S}(y)\, d\mu(y).
\end{equation}

Combining (\ref{est-45}), (\ref{est-48}) and the monotonicity of $\mathcal{E}(t)$, this completes the proof of the theorem.
\end{proof}

\begin{remark} A similar, but more involved, computation also shows that, for all $r_1\le \delta r$, 
\begin{equation}\label{re-sigma-k}
\frac{r_1^{2k}}{V_o(r_1)}\int_{B_o(r_1)}\mathcal{S}_k(y)\, d\mu(y) \le C \frac{r^{2k}}{V_o(r)}\int_{B_o(r)}\mathcal{S}_k(y)\, d\mu(y).
\end{equation}
\end{remark}

Now the implication of (\ref{2-decay}) out of  (\ref{3-decay}), as well as  Corollary \ref{coro-pm} out of Theorem \ref{main1}, can be proved by applying  Theorem \ref{thm31} and some  elementary considerations. As in the last section, our argument effectively proves the following result for any positive $d$-closed, $(1,1)$-form.

\begin{corollary}
Assume that $(M, g)$ is a complete K\"ahler manifold with nonnegative bisectional curvature. Suppose that $\rho=\sqrt{-1}\sum \rho_{i\bar{j}} dz^i \wedge dz^{\bar{j}}$, a real $d$-closed positive $(1,1)$-form with $\mathcal{S}=\Lambda \rho$. Then
$$
\liminf_{r\to \infty} \frac{r^2}{V_o(r)}\int_{B_o(r)}\mathcal{S}(y)\, d\mu(y)>0
$$
unless
$\rho\equiv 0$.
\end{corollary}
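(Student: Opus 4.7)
The plan is to establish this by contrapositive: assume that $\liminf_{r\to\infty}\frac{r^2}{V_o(r)}\int_{B_o(r)}\mathcal{S}(y)\,d\mu(y)=0$ and derive $\rho\equiv 0$. Extract a sequence $r_k\to\infty$ with $\frac{r_k^2}{V_o(r_k)}\int_{B_o(r_k)}\mathcal{S}(y)\,d\mu(y)\to 0$.

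The engine of the argument is the relative monotonicity of Theorem \ref{thm31}. Fix any $r>0$. Once $k$ is large enough that $r\le \delta(m)\,r_k$, that theorem (applied with $r_1=r$) gives
$$r^2\aint_{B_o(r)}\mathcal{S}(y)\,d\mu(y)\le C(m)\,r_k^2\aint_{B_o(r_k)}\mathcal{S}(y)\,d\mu(y).$$
The left-hand side does not depend on $k$, while the right-hand side tends to $0$ as $k\to\infty$ by the choice of $r_k$. Hence $r^2\aint_{B_o(r)}\mathcal{S}(y)\,d\mu(y)=0$ for every $r>0$, which is (much more than) the hypothesis (\ref{2-decay}).

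Having verified (\ref{2-decay}), I would invoke the $(1,1)$-form version of Theorem \ref{main1} stated immediately before the corollary in Section 3: any $d$-closed real positive $(1,1)$-form on a complete K\"ahler manifold with nonnegative bisectional curvature that satisfies (\ref{2-decay}) vanishes identically. This yields $\rho\equiv 0$, completing the contrapositive. Alternatively, the intermediate bound $\int_{B_o(r)}\mathcal{S}\,d\mu=0$ for every $r>0$, combined with $\mathcal{S}=\Lambda\rho\ge 0$ and the fact that a positive semidefinite Hermitian form with vanishing trace must be zero, already forces $\rho\equiv 0$ without invoking Theorem \ref{main1}.

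The hard part here is conceptual rather than technical: one must recognize that Theorem \ref{thm31} controls small-radius averages by large-radius averages in exactly the direction needed to upgrade subsequential vanishing into full vanishing of the averaged quantity. Once that observation is in place, the two main theorems do all of the work and the corollary follows from essentially one line of estimates.
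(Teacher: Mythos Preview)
Your proof is correct and follows exactly the route the paper indicates: combine the relative monotonicity of Theorem \ref{thm31} with the $(1,1)$-form gap theorem of Section 3 (and ``elementary considerations''). Your second alternative is in fact a pleasant streamlining of the paper's stated outline, since once Theorem \ref{thm31} forces $\aint_{B_o(r)}\mathcal{S}=0$ for every $r$, the nonnegativity of $\mathcal{S}=\Lambda\rho$ already gives $\rho\equiv 0$ and the heat-equation machinery of Section 3 is never needed.
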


\bibliographystyle{amsalpha}

{\sc  Addresses:}

{\sc  Lei Ni},
 Department of Mathematics, University of California at San Diego, La Jolla, CA 92093, USA


email: lni@math.ucsd.edu

\end{document}